\definecolor{dgreen}{rgb}{0,.8,.3}
\definecolor{lblue}{rgb}{.2,.3,.7}
\newtheorem{assumption}{Assumption}
\newtheorem{theorem}{Theorem}
\newtheorem{lemma}{Lemma}
\newtheorem{proposition}{Proposition}
\newtheorem{corollary}{Corollary}
\numberwithin{equation}{section}
\numberwithin{lemma}{section}
\numberwithin{theorem}{section}
\newcommand{\beq}{\begin{equation}}
\newcommand{\eeq}{\end{equation}}
\newcommand{\beqa}{\begin{eqnarray}}
\newcommand{\eeqa}{\end{eqnarray}}
\newcommand{\beqas}{\begin{eqnarray*}}
\newcommand{\eeqas}{\end{eqnarray*}}
\newcommand{\ba}{\begin{array}}
\newcommand{\ea}{\end{array}}
\newcommand{\bi}{\begin{itemize}}
\newcommand{\ei}{\end{itemize}}
\newcommand{\gap}{\hspace*{1em}}
\newcommand{\vgap}{\hspace*{1em}}
\newcommand{\nn}{\nonumber}
\def\bE{{\bf E}}
\def\blambda{{\bar \lambda}}
\def\bK{{\bar K}}
\def\cS{{\cal S}}
\def\dom{{\rm dom}}
\def\Diag{{\rm Diag}}
\def\hc{{\hat c}}
\def\hd{{\hat d}}
\def\hlambda{{\hat \lambda}}
\def\hv{{\hat v}}
\def\i{{\iota}}
\def\P{{\bf P}}
\def\tc{{\tilde c}}
\def\td{{\tilde d}}
\def\tf{{\tilde f}}
\def\tH{{\tilde H}}
\def\tlambda{{\tilde \lambda}}
\def\tx{{\tilde x}}
\title{Randomized block proximal damped Newton method for composite self-concordant minimization}
\author{
 Zhaosong Lu
 \thanks{Department of Mathematics, Simon Fraser University, Canada
(Email: {\tt zhaosong@sfu.ca}). This author was supported in part by NSERC Discovery Grant.}
}
\date{June 30, 2016}
\begin{document}

\maketitle

\begin{abstract}

In this paper we consider the composite self-concordant (CSC) minimization
problem, which minimizes the sum of a self-concordant function $f$ and a (possibly
nonsmooth) proper closed convex function $g$. The CSC minimization is the
cornerstone of the path-following interior point methods for solving a broad class of
convex optimization problems. It has also found numerous applications in machine
learning. The proximal damped Newton (PDN) methods have been well studied in the
literature for solving this problem that enjoy a nice iteration complexity. Given
that at each iteration these methods typically require evaluating or accessing the
Hessian of $f$ and also need to solve a proximal Newton
subproblem, the cost per iteration can be prohibitively high when applied to large-scale
problems. Inspired by the recent success of block coordinate descent methods, we
propose a randomized block proximal damped Newton (RBPDN) method for solving
the CSC minimization. Compared to the PDN methods, the computational cost per
iteration of RBPDN is usually significantly lower. The computational experiment on
a class of regularized logistic regression problems demonstrate that RBPDN is indeed
promising in solving large-scale CSC minimization problems. The convergence of RBPDN is also analyzed in the paper. In particular, we show that RBPDN is globally convergent
when $g$ is Lipschitz continuous. It is also shown that RBPDN enjoys a local linear
convergence. Moreover, we show that for a class of $g$ including the case where $g$ is smooth (but not necessarily self-concordant) and $\nabla g$ is Lipschitz continuous in its domain, RBPDN enjoys a global linear convergence.
 As a striking consequence, it shows that the classical damped Newton methods \cite{Ne04,ZhXi15-1} and the PDN \cite{TrKyCe15} for such $g$ are globally linearly
convergent, which was previously unknown in the literature. Moreover,
this result can be used to sharpen the existing iteration complexity of these methods.

\vskip14pt
 \noindent {\bf Keywords}:  Composite self-concordant minimization, damped Newton method, proximal damped Newton method, randomized block proximal damped Newton method.
\end{abstract}

\vskip14pt
\noindent {\bf AMS subject classifications}: 49M15, 65K05, 90C06, 90C25, 90C51
\section{Introduction}

In this paper we are interested in the composite self-concordant minimization:
\beq \label{self-concord}
F^* = \min\limits_x \left\{F(x) := f(x) + g(x)\right\},
\eeq
where $f:\Re^N \to \bar\Re:=\Re\cup \{\infty\}$ is a self-concordant function with parameter $M_f \ge 0$ and $g:\Re^N \to \bar\Re$ is a (possibly nonsmooth) proper closed convex function. Specifically, by the standard definition of a self-concordant function (e.g., see \cite{NeNe94,Ne04}),  $f$ is convex and three times continuously differentiable in its domain denoted by $\dom(f)$, and moreover,
\[
|\psi'''(0)| \le M_f (\psi''(0))^{3/2}
\]
holds for every $x\in\dom(f)$ and $u\in\Re^N$, where $\psi(t)=f(x+tu)$ for any
$t\in\Re$. In addition, $f$ is called a standard self-concordant function if $M_f=2$.

It is well-known that problem \eqref{self-concord} with $g=0$ is  the
cornerstone of the path-following interior point methods for solving a broad class of
convex optimization problems. Indeed,  in the seminal work by Nesterov and Nemirovski
\cite{NeNe94}, many convex optimization problems can be recast into the problem:
\beq \label{convex-opt}
\min\limits_{x\in\Omega} \langle c,x \rangle,
\eeq
where $c\in\Re^N$, $\Omega \subseteq \Re^N$ is a closed convex set equipped
with a self-concordant barrier function $B$, and $\langle \cdot, \cdot \rangle$ denotes
the standard inner product.  It has been shown that an approximate
solution of problem \eqref{convex-opt} can be found by solving approximately a
sequence of barrier problems:
\[
\min\limits_x \left\{f_t(x) := \langle c,x \rangle + t B(x)\right\},
\]
where $t>0$ is updated with a suitable scheme. Clearly, these
barrier problems are a special case of \eqref{self-concord} with $f=f_t$ and $g=0$.


Recently, Tran-Dinh et al.\ \cite{TrKyCe14} extended the aforementioned
path-following scheme to solve the problem
\[
\min\limits_{x\in\Omega} g(x),
\]
where $g$ and $\Omega$ are defined as above. They showed that
an approximate solution of this problem can be obtained by solving approximately a
sequence of composite barrier problems:
\[
\min\limits_x t B(x) + g(x),
\]
where $t>0$ is suitably updated. These problems are also a special case of  \eqref{self-concord} with $f=t B$.

In addition, numerous models in machine learning are also
a special case of  \eqref{self-concord}. For example, in the context of supervised
learning, each sample is recorded as $(w,y)$, where $w\in\Re^N$ is a sample feature vector
and $y\in\Re$ is usually a target response or a binary (+1 or -1)  label. A loss
function $\phi(x;w,y)$ is typically associated with each $(w,y)$. Some popular loss functions
include, but are not limited to:
\bi
\item squared loss: $\phi(x;w,y) = (y-\langle w, x\rangle)^2$;
\item logistic loss: $\phi(x;w,y) = \log(1+\exp(-y\langle w, x\rangle)$.
\ei
A linear predictor is often estimated by solving the empirical
risk minimization model:
\[
\min\limits_x \underbrace{\frac1m \sum^m_{i=1} \phi(x;w^i,y_i) + \frac{\mu}{2} \|x\|^2}_{\tf(x)} +   g(x),
\]
where $m$ is the sample size and $g$ is a regularizer such as $\ell_1$ norm. For stability purpose,  the regularization term $\mu \|x\|^2/2$, where $\mu>0$ and $\|\cdot\|$ is the Euclidean norm, is often included to make the model strongly convex  (e.g., see  \cite{ZhXi15-1,ZhXi15-2}). It is easy to observe that when $\phi$ is the squared
loss, the associated $\tf$ is self-concordant with parameter $M_{\tf}=0$. In addition,  when
$\phi$ is the logistic loss, $y_i \in \{-1,1\}$ for all $i$ and $\mu>0$, Zhang and Xiao \cite{ZhXi15-1,ZhXi15-2} showed that the associated $\tf$ is self-concordant with parameter $M_{\tf}=R/\sqrt{\mu}$,
where $R=\max_i \|w^i\|$. Besides, they proved that the associated $\tf$
for a general class of loss functions $\phi$ is self-concordant, which includes a
smoothed hinge loss.

As another example, the graphical model is often used in statistics to estimate the conditional independence of a set of random variables (e.g., see \cite{YuLi07,DaBaEl08,FrHaTi08,Lu10}), which is in the form of:
\[
\min\limits_{X \in \cS^N_{++}} \langle S, X \rangle - \log\det(X) + \rho \sum_{i \neq j}|X_{ij}|,
\]
where $\rho>0$, $S$ is a sample covariance matrix, and $\cS^N_{++}$ is the set of
$N \times N$ positive definite matrices. Given that  $-\log\det(X)$ is a self-concordant function in $\cS^N_{++}$  (e.g., see \cite{Ne04}), it is clear to see that the
graphical model is also a special case of \eqref{self-concord}.

When $g=0$, problem \eqref{self-concord} can be solved by a damped
Newton (DN) method or a mixture of DN and Newton methods (e.g.,
see \cite[Section 4.1.5]{Ne04}).
To motivate our study, we now briefly review these methods for solving \eqref{self-concord} with $g=0$.
In particular, given an initial point $x^0 \in \dom(F)$, the DN method updates the iterates according to
\[
x^{k+1} = x^k + \frac{d^k}{1+\lambda_k}, \quad\quad \forall k \ge 0,
\]
where $d^k$ is the Newton direction and $\lambda_k$ is the local norm of $d^k$ at
$x^k$, which are given by:
\beq \label{dk}
d^k = -(\nabla^2 f(x^k))^{-1}\nabla f(x^k), \quad\quad
\lambda_k =\sqrt{(d^k)^T \nabla^2 f(x^k) d^k}.
\eeq
The mixture of DN and Newton first applies DN and then switches to the standard Newton
method (i.e., setting the step length to $1$) once an iterate is sufficiently close to the optimal solution.
The discussion in \cite[Section 4.1.5]{Ne04}  has a direct implication that both DN and the mixture of
DN and Newton find an approximate solution $x^k$ satisfying  $\lambda_k \le \epsilon$ in at most
\[
O\left(F(x^0)-F^*+\log\log \epsilon^{-1}\right)
\]
 iterations. This complexity can be obtained by considering two phases of these
methods. The first phase consists of the iterations executed by DN for generating
a point lying in a certain neighborhood of the optimal solution in which the local
quadratic convergence of DN or the standard Newton method is ensured to occur,
while the second phase consists of the rest of the iterations. Indeed,  $O\left(F(x^0)-F^*\right)$ and $O(\log\log \epsilon^{-1})$ are an estimate of the number of iterations of these two phases, respectively.

Recently, Zhang and Xiao \cite{ZhXi15-1,ZhXi15-2} proposed an inexact damped Newton (IDN) method for solving
\eqref{self-concord} with $g=0$. Their  method is almost identical to DN except that the search direction $d^k$ defined
in \eqref{dk} is inexactly computed by solving approximately the linear system
\[
\nabla^2 f(x^k) d = -\nabla f(x^k).
\]
By controlling suitably the inexactness on $d^k$ and considering the similar two phases
as above, they showed that IDN can find an approximate solution $x^k$ satisfying  $F(x^k)-F^* \le \epsilon$ in at most
\beq \label{complexity-idn}
O\left(F(x^0)-F^*+\log \epsilon^{-1}\right)
\eeq
iterations.

In addition, Tran-Dinh et al.\ \cite{TrKyCe15} recently proposed a proximal damped
 Newton (PDN) method and a proximal Newton method for solving \eqref{self-concord}.
These methods are almost the same as the aforementioned DN and the mixture of DN
and Newton except that $d^k$  is chosen as the following proximal Newton direction:
\beq  \label{prox-dir}
d^k = \arg\min\limits_d \left\{ f(x^k) + \langle \nabla f(x^k), d\rangle + \frac12 \langle d,\nabla^2 f(x^k) d\rangle + g(x^k+d) \right\}.
\eeq
It has essentially been shown in  \cite[Theorems 6, 7]{TrKyCe15} that the PDN and the proximal Newton method
can find an approximate solution $x^k$ satisfying $\lambda_k \le \epsilon$ in at most
\beq \label{complexity-pdn}
O\left(F(x^0)-F^*+\log\log \epsilon^{-1}\right)
\eeq
iterations, where $\lambda_k = \sqrt{(d^k)^T \nabla^2 f(x^k) d^k}$. This complexity was derived similarly as for the DN and the mixture of DN and Newton by considering the two phases mentioned above.

Besides, proximal gradient type methods and proximal Newton type methods have
been proposed in the literature for solving a class of composite minimization problems in
the form of  \eqref{self-concord} (e.g., see  \cite{BeTe09,Ne07,HsSuDhRa11,BeFa12,LeSuSa14}).   At each iteration, proximal gradient
type methods require the gradient of $f$ while proximal Newton type methods need to
access the Hessian of $f$ or its approximation. Though the proximal Newton type
methods \cite{BeFa12,LeSuSa14} are applicable to solve \eqref{self-concord}, they
typically require a linear search procedure to determine a suitable step length, which
may be expensive for solving large-scale problems. In this paper we are only interested in
a line-search free method for solving problem \eqref{self-concord}.

It is known from \cite{TrKyCe15} that PDN has a better iteration complexity than the
accelerated proximal gradient methods \cite{BeTe09,Ne07}. The cost per iteration of
PDN is, however, generally much higher because it computes the search direction $d^k$
according to \eqref{prox-dir} that involves $\nabla^2 f(x^k)$.  This can bring an
enormous challenge to PDN for solving large-scale problems.  Inspired by the recent success of block coordinate descent methods, block proximal gradient methods and
block quasi-Newton type methods (e.g., see  \cite{BeTe13, ChHsLi08, FeRi15, HoWa, LeSi13, LeLe10, LiLuXi15, LiWr14, LuXi13, LuXi15, Ne12, PaNe15, QuRiTaFe16, RiTa14, ShZh13,TsYu09, WeGoSc12, Wr12})  for solving large-scale problems,  we propose a randomized block proximal damped Newton (RBPDN) method for solving \eqref{self-concord} with
\beq \label{gx}
g(x) = \sum_{i=1}^n g_i(x_i) ,
\eeq
where each $x_i$ denotes a subvector of~$x$ with dimension~$N_i$, $\{x_i :i=1,\ldots,n\}$ form a partition of the components of~$x$, and each~$g_i: \Re^{N_i} \to \bar\Re$ is a proper closed convex function. Briefly speaking, suppose that
$p_1, \ldots, p_n>0$ are a set of probabilities such that $\sum_i p_i=1$. Given a current
iterate $x^k$, we randomly choose $\i\in \{1,\ldots,n\}$ with probability $p_{\i}$. The
next iterate $x^{k+1}$ is obtained by setting  $x^{k+1}_j = x^k_j$ for $j \neq \i$ and
\[
x^{k+1}_{\i} = x^k_{\i} + \frac{d_{\i}(x^k)}{1+\lambda_{\i}(x^k)},
\]
where $d_{\i}(x^k)$ is an approximate solution to the subproblem
\beq \label{rbpdn-subprob}
\min_{d_\i} \left\{f(x^k) + \langle \nabla_\i f(x^k), d_\i\rangle + \frac12 \langle d_\i,
\nabla^2_{\i\i} f(x^k), d_\i \rangle +g_\i(x^k_\i+d_\i)\right\},
\eeq
$\lambda_\i(x^k) = \sqrt{\langle d_\i(x^k), \nabla^2_{\i\i}f(x^k) d_\i(x^k)\rangle}$, and $ \nabla_\i f(x^k)$ and $\nabla^2_{\i\i}f(x^k)$ are respectively the subvector and  the submatrix of $\nabla f(x^k)$ and $\nabla^2 f(x^k)$ corresponding to $x_\i$.

In contrast with the (full) PDN \cite{TrKyCe15}, the cost per iteration of RBPDN can be
considerably lower because: (i) only the {\it submatrix} $\nabla^2_{\i\i} f(x^k)$ rather
than the {\it full} $\nabla^2 f(x^k)$ needs to be accessed and/or evaluated; and (ii) the
dimension of subproblem \eqref{rbpdn-subprob} is much smaller than that of
\eqref{prox-dir} and thus the computational cost for solving \eqref{rbpdn-subprob} can
also be substantially lower. In addition, compared to the randomized block
accelerated proximal gradient (RBAPG) method \cite{FeRi15,LiLuXi15},
RBPDN utilizes the entire curvature information in the random subspace
(i.e., $\nabla^2_{\i\i} f(x^k)$) while RBAPG only uses the partial curvature information, particularly, the extreme eigenvalues of
$\nabla^2_{\i\i} f(x^k)$.  It is thus expected that RBPDN takes less number of
iterations than RBAPG for finding an approximate solution of similar quality, which is
indeed demonstrated in our numerical experiments.
Overall, RBPDN can be much faster than RBAPG, provided that the subproblem \eqref{rbpdn-subprob} is efficiently solved.

The convergence of RBPDN is analyzed in this paper. In particular, we show that
when $g$ is Lipschitz continuous in
\beq \label{S0}
\cS(x^0) := \{x: F(x) \le F(x^0)\},
\eeq
RBPDN is globally convergent, that is, $\bE[F(x^k)] \to F^*$ as $k\to\infty$. It is also
shown that RBPDN enjoys a local linear convergence. Moreover, we show that for a class of $g$ including the case where $g$ is smooth (but not necessarily self-concordant) and $\nabla g$ is Lipschitz continuous in $\cS(x^0)$, RBPDN
enjoys a global linear convergence, that is, there exists some $q\in (0,1)$ such that
\[
\bE[F(x^k)-F^*] \le q^k (F(x^0)-F^*),  \quad\quad \forall k \ge 0,
\]
Notice that the DN \cite{Ne04} and PDN \cite{TrKyCe15} are a special case of RBPDN
with $n=1$. As a striking consequence, it follows that they are globally linearly
convergent for such $g$, which was previously unknown in the literature.
Moreover, this result can be used to sharpen the existing iteration complexity of the
first phase of DN \cite{Ne04}, IDN \cite{ZhXi15-1}, PDN \cite{TrKyCe15},
the proximal Newton method \cite{TrKyCe15} and the mixture of DN and Newton \cite{Ne04}.

The rest of this paper is organized as follows. In Subsection \ref{notation}, we present some assumption, notation and also some known facts. We propose in Section \ref{alg}
a RBPDN method for solving problem \eqref{self-concord}  in which $g$ is in the form
of  \eqref{gx}. In Section \ref{tech}, we provide some technical preliminaries.
The convergence analysis of RBPDN is given in Section \ref{converge}. Numerical
results are presented in Section \ref{res}.

\subsection{Assumption, notation and facts} \label{notation}

Throughout this paper, we make the following assumption for problem \eqref{self-concord}.

\begin{assumption} \label{assump}
\bi
\item[(i)]
$f$ is a standard self-concordant function\footnote{It follows from \cite[Corollary 4.1.2]{Ne04} that if $f$ is self-concordant with parameter $M_f$, then $\frac{M^2_f}{4} f$ is
a standard self-concordant function. Therefore, problem \eqref{self-concord} can be rescaled into an equivalent problem for which Assumption \ref{assump} (i) holds.} and $g$ is in the form of \eqref{gx}.
\item[(ii)]
$\nabla^2 f$ is continuous and positive definite in the domain of $F$.
\item[(iii)] Problem \eqref{self-concord} has a unique optimal solution $x^*$.
\ei
\end{assumption}

Let $\Re^N$ denote the Euclidean space of dimension $N$ that is
equipped with the standard inner product $\langle \cdot,\cdot \rangle$.  For every $x\in\Re^N$, let $x_i$ denote a subvector
of~$x$ with dimension~$N_i$, where $\{x_i :i=1,\ldots,n\}$ form a particular partition of
the components of~$x$.

$\|\cdot\|$ denotes the Euclidean norm  of a vector or the spectral norm of a matrix.
The local norm and its dual norm at any $x\in\dom(f)$ are given by
\[
\|u\|_x := \sqrt{\langle u, \nabla^2 f(x) u\rangle}, \quad \|v\|^*_x := \sqrt{\langle v, (\nabla^2 f(x))^{-1} v\rangle}, \quad\quad \forall u, v \in \Re^N.
\]
 It is easy to see that
\beq \label{uv-product1}
|\langle u, v \rangle| \le \|u\|_x \cdot \|v\|^*_x, \quad\quad \forall u, v \in \Re^N.
\eeq
For  any $i\in \{1,\ldots,n\}$, let $\nabla^2_{ii} f(x)$ denote the submatrix of $\nabla^2 f(x)$ corresponding to the subvector $x_i$. The local norm and its dual norm of $x$  restricted to the subspace of $x_i$ are defined as
\beq \label{xi-norm}
\|y\|_{x_i} := \sqrt{\langle y, \nabla^2_{ii} f(x) y\rangle}, \quad \|z\|^*_{x_i} := \sqrt{\langle z, (\nabla^2_{ii} f(x))^{-1} z\rangle}, \quad\quad \forall y,z \in \Re^{N_i}.
\eeq
In addition, for any symmetric positive definite matrix $M$, the weighted norm and its
dual norm associated with $M$ are defined as
\beq \label{M-norm}
\|u\|_M := \sqrt{\langle u, Mu \rangle}, \quad\quad \|v\|^*_M := \sqrt{\langle v, M^{-1} v \rangle}.
\eeq
It is clear that
\beq \label{uv-product2}
|\langle u, v \rangle| \le \|u\|_M \cdot \|v\|^*_M.
\eeq

The following two functions have played a crucial role in studying some properties of a standard self-concordant function (e.g., see \cite{Ne04}):
\beq \label{omega}
\omega(t) = t-\ln(1+t),\quad\quad \omega_*(t) = -t-\ln(1-t).
\eeq
 It is not hard to observe that $\omega(t) \ge 0$ for
all $t>-1$ and $\omega_*(t) \ge 0$ for every $t<1$, and moreover,  $\omega$ and
$\omega_*$ are strictly increasing in $[0,\infty)$ and $[0,1)$, respectively. In addition, they are conjugate of each other, which implies that for any $t \ge 0$ and $\tau\in[0,1)$,
\beq \label{conj}
 \omega(t) = t \omega'(t)- \omega_*(\omega'(t)), \quad\quad \omega(t) + \omega_*(\tau) \ge \tau t
\eeq
(e.g., see \cite[Lemma 4.1.4]{Ne04}).

It is known from  \cite[Theorems 4.1.7, 4.1.8]{Ne04}) that $f$ satisfies:
\beqa
f(y) &\ge& f(x) + \langle \nabla f(x), y-x \rangle + \omega(\|y-x\|_x), \quad \forall x \in \dom(f), \forall y; \label{f-lowbd} \\ [5pt]
f(y) &\le & f(x) + \langle \nabla f(x), y-x \rangle + \omega_*(\|y-x\|_x) \quad \forall x,y \in \dom(f), \ \|y-x\|_x<1. \label{f-upbd}
\eeqa

\section{Randomized block proximal damped Newton method}
\label{alg}

In this section we propose a randomized block proximal damped Newton (RBPDN)
method for solving problem \eqref{self-concord} in which $g$ is in the form of
\eqref{gx}.

\gap

\noindent
{\bf RBPDN method for solving  \eqref{self-concord}:} \\ [5pt]
Choose $x^0 \in \dom(F)$, $\eta\in[0, 1/4]$,  and $p_i>0$ for $i=1,\ldots,n$ such that $\sum^n_{i=1} p_i=1$. Set $k=0$.
\begin{itemize}
\item[1)] Pick $\i \in \{1,\ldots,n\}$ randomly with probability $p_\i$.
\item[2)]Find an approximate solution $d_\i(x^k)$  to the subproblem
\beq \label{k-subprob}
\min_{d_\i} \left\{f(x^k) + \langle \nabla_\i f(x^k), d_\i\rangle + \frac12 \langle d_\i,
\nabla^2_{\i\i} f(x^k), d_\i \rangle  +g_\i(x^k_\i+d_\i) \right\}
\eeq
such that
\beqa
-v_\i &\in& \nabla_\i f(x^k) + \nabla^2_{\i\i} f(x^k) d_\i(x^k) + \partial g_\i(x^k_\i+d_\i(x^k)),  \label{d-cond1} \\ [6pt]
\|v_\i\|^*_{x^k_\i} &\le& \eta \|d_\i(x^k)\|_{x^k_\i} \label{d-cond2}
\eeqa
for some $v_\i$.
\item[3)] Set  $x^{k+1}_j= x^k_j$ for $j \neq \i$, $x^{k+1}_\i = x^k_\i + d_\i(x^k)/(1+\lambda_\i(x^k))$, $k \leftarrow k+1$ and go to step 1), where
$\lambda_\i(x^k) = \sqrt{\langle d_\i(x^k), \nabla^2_{\i\i}f(x^k) d_\i(x^k)\rangle}$.
\end{itemize}
\noindent
{\bf end}

\gap

{\bf Remark:}
\bi
\item[(i)] The constant $\eta$ controls the inexactness of solving subproblem \eqref{k-subprob}. Clearly, $d_\i(x^k)$ is the optimal solution to
\eqref{k-subprob} if $\eta=0$.
\item[(ii)] For various $g$, the above $d_\i(x^k)$
can be efficiently found. For example, when $g = 0$, $d_\i(x^k)$ can be computed
by conjugate gradient method. In addition, when $g=\|\cdot\|_{\ell_1}$, it can be
found by numerous methods (e.g., see  \cite{BeTe09,Ne07,HaYiZh07,VaFr08,WrNoFi09,JYang,XiZh13,Ulbrich,Byrd,LuCh15}).
\item[(iii)] To verify \eqref{d-cond2}, one has to compute $\|v_\i\|^*_{x^k_\i}$, which can be expensive since $(\nabla^2_{\i\i}f(x^k))^{-1}$ is involved. Alternatively, we may
replace \eqref{d-cond2} by a relation that can be cheaply verified and also ensures \eqref{d-cond2}.  Indeed, as seen later, the sequence $\{x^k\}$ lies in the compact set $\cS(x^0)$ and $\nabla^2 f(x)$
is positive definite for all $x\in\cS(x^0)$. It follows that
\beq \label{sigma-f}
 \sigma_f := \min\limits_{x\in\cS(x^0)} \lambda_{\min}(\nabla^2 f (x))
\eeq
is well-defined and positive,
where $\lambda_{\min}(\cdot)$ denotes the minimal eigenvalue of the associated matrix.
One can observe from \eqref{xi-norm} and \eqref{sigma-f} that
\[
\|v_\i\|^*_{x^k_\i} = \sqrt{v^T_\i (\nabla^2_{\i\i}f(x^k))^{-1} v_\i} \le \frac{\|v_\i\|}{\sqrt{\sigma_f}} .
\]
It follows that if $\|v_\i\| \le \eta \sqrt{\sigma_f} \|d_\i(x^k)\|_{x^k_\i}$ holds,
so does \eqref{d-cond2}. Therefore,  for a cheaper computation, one can replace \eqref{d-cond2} by
\[
\|v_\i\| \le \eta \sqrt{\sigma_f} \|d_\i(x^k)\|_{x^k_\i},
\]
provided that $\sigma_f$ is known or can be bounded from below.
\item[(iv)] The convergence of RBPDN will be analyzed in Section \ref{converge}. In particular, we show that if $g$ is Lipschitz continuous in $\cS(x^0)$, then RBPDN is globally convergent. It is also shown that RBPDN enjoys a local linear convergence. Moreover,
 we show that for a class of $g$ including the case where $g$ is smooth (but not necessarily self-concordant) and $\nabla g$ is Lipschitz continuous in $\cS(x^0)$, RBPDN enjoys a global linear convergence.
\ei


\section{Technical preliminaries}
\label{tech}


In this section we establish some technical results that will be used later to study the
convergence of RBPDN.

For any $x \in \dom(F)$, let $\hd(x)$ be an inexact proximal Newton direction,
which is an approximate solution of
\[
\min\limits_d \left\{ f(x) + \langle \nabla f(x), d\rangle + \frac12 \langle d,\nabla^2 f(x) d\rangle + g(x+d) \right\}
\]
such that
\beq \label{v-subgrad0}
-\hv \in \nabla f(x) + \nabla^2 f(x) \hd(x) + \partial g(x+\hd(x))
\eeq
for some $\hv$ satisfying $\|\hv\|^*_x \le \eta \|\hd(x)\|_x$ with $\eta \in [0,1/4]$.

The following theorem provides an estimate on the reduction of the objective value
resulted from an inexact proximal damped Newton step.

\begin{lemma} \label{thm:descent}
Let $x \in \dom(F)$ and $\hd(x)$ be defined above with $\eta \in [0,1/4]$. Then
\[
F\left(x + \frac{\hd}{1+\hlambda}\right) \le F(x) - \frac12 \omega(\hlambda),
\]
where $\hd=\hd(x)$ and $\hlambda=\|\hd(x)\|_x$.
\end{lemma}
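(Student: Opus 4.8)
The plan is to set $x^+ := x + \hd/(1+\hlambda)$ and $t := 1/(1+\hlambda)$, so that $t \in (0,1]$, $x^+ = x + t\hd = (1-t)x + t(x+\hd)$, and $\|x^+-x\|_x = t\hlambda = \hlambda/(1+\hlambda) < 1$. Since the unit Dikin ellipsoid around $x$ is contained in $\dom(f)$ (a standard property of self-concordant functions; see \cite{Ne04}), we get $x^+ \in \dom(f)$; moreover $\partial g(x+\hd)\neq\emptyset$ by \eqref{v-subgrad0}, hence $x+\hd \in \dom(g)$, and convexity of $\dom(g)$ then gives $x^+\in\dom(g)$. So $x^+ \in \dom(F)$ and all the bounds below apply. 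I would then estimate $f$ and $g$ separately along the step. (If $\hlambda=0$ then $\hd=0$ since $\nabla^2 f(x)\succ 0$, and the claim is trivial, so one may assume $\hlambda>0$.)

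For $f$, apply the self-concordant upper bound \eqref{f-upbd} at $x^+$ to get $f(x^+) \le f(x) + t\langle \nabla f(x), \hd\rangle + \omega_*(t\hlambda)$, and then simplify $\omega_*(t\hlambda)$ directly from the definition \eqref{omega}: since $1 - t\hlambda = 1/(1+\hlambda)$, a short computation yields $\omega_*(t\hlambda) = -\hlambda/(1+\hlambda) + \ln(1+\hlambda) = \hlambda^2/(1+\hlambda) - \omega(\hlambda)$. For $g$, use convexity of $g$ at $x^+ = (1-t)x + t(x+\hd)$ to get $g(x^+) \le (1-t)g(x) + t\,g(x+\hd)$, and bound $g(x+\hd)$ by the subgradient inequality read off from \eqref{v-subgrad0}: since $-\hv - \nabla f(x) - \nabla^2 f(x)\hd \in \partial g(x+\hd)$, we have $g(x) \ge g(x+\hd) + \langle \hv + \nabla f(x) + \nabla^2 f(x)\hd, \hd\rangle$, i.e.\ $g(x+\hd) - g(x) \le -\langle \hv, \hd\rangle - \langle \nabla f(x), \hd\rangle - \hlambda^2$ (using $\langle \hd, \nabla^2 f(x)\hd\rangle = \hlambda^2$). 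Multiplying by $t$ gives $g(x^+) - g(x) \le -t\langle\hv,\hd\rangle - t\langle\nabla f(x),\hd\rangle - t\hlambda^2$.

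Adding the two estimates, the $\pm t\langle\nabla f(x),\hd\rangle$ terms cancel and the $\pm \hlambda^2/(1+\hlambda)$ terms cancel (using $t\hlambda^2 = \hlambda^2/(1+\hlambda)$), leaving $F(x^+) - F(x) \le -\omega(\hlambda) - t\langle\hv,\hd\rangle$. I would then use Cauchy--Schwarz in the local norm \eqref{uv-product1} together with the inexactness condition $\|\hv\|^*_x \le \eta\|\hd\|_x$ to obtain $-t\langle\hv,\hd\rangle \le t\|\hv\|^*_x\|\hd\|_x \le t\eta\hlambda^2 = \eta\hlambda^2/(1+\hlambda) \le \hlambda^2/(4(1+\hlambda))$, where the last step uses $\eta \le 1/4$. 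The proof finishes with the elementary scalar inequality $\omega(\tau) \ge \tau^2/(2(1+\tau))$ for all $\tau \ge 0$ (both sides vanish at $0$ and a derivative comparison shows the left-hand side grows at least as fast), which gives $\hlambda^2/(4(1+\hlambda)) \le \frac12\omega(\hlambda)$ and hence $F(x^+) - F(x) \le -\frac12\omega(\hlambda)$.

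The argument is mostly routine manipulation; the points needing care are getting the sign right in the subgradient inequality extracted from \eqref{v-subgrad0}, verifying that $x^+$ stays in $\dom(f)$ so that \eqref{f-upbd} is legitimate, and the final scalar inequality $\omega(\tau)\ge \tau^2/(2(1+\tau))$. It is worth noting that the hypothesis $\eta\le 1/4$ is precisely what makes the last estimate close, and it is tight as $\hlambda\to 0$ (where $\omega(\hlambda)\approx \hlambda^2/2$).
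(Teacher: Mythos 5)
Your proof is correct and follows essentially the same route as the paper's: the bound \eqref{f-upbd} for $f$, convexity of $g$ combined with the subgradient extracted from \eqref{v-subgrad0}, cancellation of the linear and quadratic terms to produce $-\omega(\hlambda)$ plus an error term controlled by $\eta\hlambda^2/(1+\hlambda)$, and the same scalar inequality $\omega(\tau)\ge \tau^2/(2(1+\tau))$ to absorb that error into $\tfrac12\omega(\hlambda)$. The only differences are cosmetic (you evaluate $\omega_*(\hlambda/(1+\hlambda))$ directly rather than via the conjugacy relation \eqref{conj}, and you add an explicit domain check for $x^+$ that the paper omits).
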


\begin{proof}
By the definition of $\hd$ and $\hlambda$, one can observe that
\[
\|\hd\|_x/(1+\hlambda) = \hlambda/(1+\hlambda) <1.
\]
It then follows from \eqref{f-upbd} that
\beq \label{f-ineq}
f\left(x + \frac{\hd}{1+\hlambda}\right) \le f(x) + \frac{1}{1+\hlambda} \langle \nabla f(x), \hd \rangle +
\omega_*\left(\frac{\hlambda}{1+\hlambda}\right).
\eeq
In view of \eqref{v-subgrad0} and $\hd=\hd(x)$, there exists $s\in \partial g(x+\hd)$ such that
\beq \label{v-subgrad2}
\nabla f(x) + \nabla^2 f(x) \hd + \hv + s = 0.
\eeq
By the convexity of $g$, one has
\beq \label{g-ineq}
g\left(x + \frac{\hd}{1+\hlambda}\right) \le \frac{g(x+\hd) }{1+\hlambda} + \frac{\hlambda  g(x)}{1+\hlambda}
\le \frac{1}{1+\hlambda}[g(x) + \langle s,\hd \rangle] + \frac{\hlambda  g(x)}{1+\hlambda} =
g(x) +  \frac{\langle s,\hd \rangle}{1+\hlambda}.
\eeq
Summing up \eqref{f-ineq} and \eqref{g-ineq}, and using \eqref{v-subgrad2}, we have
\beqa
F\left(x + \frac{\hd}{1+\hlambda}\right) &\le& F(x) + \frac{1}{1+\hlambda} \langle \nabla f(x)+s, \hd \rangle
+ \omega_*\left(\frac{\hlambda}{1+\hlambda}\right) \nn \\ [5pt]
& = & F(x) + \frac{1}{1+\hlambda} \langle -\nabla^2 f(x) \hd-\hv, \hd \rangle
+ \omega_*\left(\frac{\hlambda}{1+\hlambda}\right) \nn \\ [5pt]
&\le& F(x) - \frac{\hlambda^2}{1+\hlambda} + \frac{\hlambda}{1+\hlambda} \|v\|^*_x
+ \omega_*\left(\frac{\hlambda}{1+\hlambda}\right),
\label{F-reduction1}
\eeqa
where the last relation is due to the definition of $\hlambda$ and \eqref{uv-product2}.
In addition, observe from \eqref{omega} that $\omega'(\hlambda) = \hlambda/(1+\hlambda)$. It follows from this and \eqref{conj} that
\[
-\frac{\hlambda^2}{1+\hlambda} + \omega_*\left(\frac{\hlambda}{1+\hlambda}\right) = -\hlambda\omega'(\hlambda)
+ \omega_*\left(\omega'(\hlambda)\right) = -\omega(\hlambda),
\]
which along with \eqref{F-reduction1}, $\|\hv\|^*_x \le \eta \|\hd\|_x$ and $\hlambda=\|\hd\|_x$ implies
\beq \label{F-descent-0}
F\left(x + \frac{\hd}{1+\hlambda}\right) \le F(x) - \omega(\hlambda) + \frac{\eta\hlambda^2}{1+\hlambda}.
\eeq

Claim that for any $\eta \in [0,1/4]$,
\beq \label{claim1}
\frac{\eta\hlambda^2}{1+\hlambda} \le \frac12 \omega(\hlambda).
\eeq
Indeed, let $\phi(\lambda) = \frac12 \omega(\lambda)(1+\lambda)-\eta\lambda^2$. In view of $\omega'(\lambda) = \lambda/(1+\lambda)$, \eqref{omega} and $\eta \in[0,1/4]$, one has that for every $\lambda \ge 0$,
\[
\ba{lcl}
\phi'(\lambda) &=& \frac12 [\omega'(\lambda)(1+\lambda) +\omega(\lambda)] - 2\eta \lambda =  \frac12 \left[\frac{\lambda}{1+\lambda}(1+\lambda) +\lambda-\ln(1+\lambda)\right] - 2\eta \lambda \\ [7pt]
&=& (1-2\eta) \lambda - \frac12\ln(1+\lambda) \ge \frac12 [\lambda - \ln(1+\lambda)] = \frac12 \omega(\lambda) \ge 0.
\ea
\]
This together with $\phi(0)=0$ implies $\phi(\lambda) \ge 0$ . Thus \eqref{claim1} holds as claimed. The conclusion of this lemma then immediately follows from \eqref{F-descent-0} and \eqref{claim1}.
\end{proof}

\vgap

We next provide some lower and upper bounds on the optimality gap.

\begin{lemma} \label{F-gap}
Let $x\in\dom(F)$ and $\blambda(x)$ be defined as
\beq \label{blambda}
\blambda(x) := \min\limits_{s\in\partial F(x)} \|s\|^*_x.
\eeq
Then
\beq \label{opt-gap}
\omega(\|x-x^*\|_{x^*}) \le F(x) - F^* \le \omega_*(\blambda(x)),
\eeq
where the second inequality is valid only when $\blambda(x)<1$.
\end{lemma}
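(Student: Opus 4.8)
The two inequalities are genuinely different in character, so I would treat them separately. The left inequality, $\omega(\|x-x^*\|_{x^*}) \le F(x) - F^*$, should follow from the lower-bound property \eqref{f-lowbd} of a standard self-concordant function applied at the point $x^*$. Concretely, \eqref{f-lowbd} with the roles $x \leftarrow x^*$ and $y \leftarrow x$ gives
\[
f(x) \ge f(x^*) + \langle \nabla f(x^*), x - x^* \rangle + \omega(\|x-x^*\|_{x^*}).
\]
Adding $g(x)$ to both sides and using convexity of $g$ in the form $g(x) \ge g(x^*) + \langle s_g, x - x^*\rangle$ for any $s_g \in \partial g(x^*)$, I would combine the two linear terms into $\langle \nabla f(x^*) + s_g, x - x^*\rangle$. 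The first-order optimality condition for \eqref{self-concord} at the minimizer $x^*$ says precisely that there exists $s_g \in \partial g(x^*)$ with $\nabla f(x^*) + s_g = 0$, so that inner product vanishes and we are left with $f(x) + g(x) \ge f(x^*) + g(x^*) + \omega(\|x-x^*\|_{x^*})$, i.e. $F(x) - F^* \ge \omega(\|x-x^*\|_{x^*})$.

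For the right inequality, the plan is to bound $F(x) - F^*$ from above by $F(x) - F(y)$ for a cleverly chosen $y$, and in fact the natural choice is to take a damped-Newton-type step from $x$ toward stationarity — but more directly, I would use the upper bound \eqref{f-upbd} together with an argument analogous to the one in Lemma \ref{thm:descent}. Fix $s \in \partial F(x)$ achieving the minimum in \eqref{blambda}, so $\|s\|^*_x = \blambda(x) < 1$; write $s = \nabla f(x) + s_g$ with $s_g \in \partial g(x)$. Consider the trial point $y = x - t (\nabla^2 f(x))^{-1} s$ for a scalar $t$ to be optimized; then $\|y - x\|_x = t \blambda(x)$, and as long as $t\blambda(x) < 1$ this stays in $\dom(f)$ and \eqref{f-upbd} applies. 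Using \eqref{f-upbd} for the $f$-part and convexity of $g$ (bounding $g(y) \le g(x) + \langle s_g, y - x\rangle$) gives, after combining the linear terms via $\langle s, y - x\rangle = -t \langle s, (\nabla^2 f(x))^{-1} s\rangle = -t (\blambda(x))^2$,
\[
F(y) \le F(x) - t(\blambda(x))^2 + \omega_*\bigl(t\,\blambda(x)\bigr).
\]
Hence $F(x) - F^* \ge F(x) - F(y) \ge t(\blambda(x))^2 - \omega_*(t\blambda(x))$ — wait, this has the wrong sign for what we want. The correct reading is $F^* \le F(y) \le F(x) - [t(\blambda(x))^2 - \omega_*(t\blambda(x))]$, so $F(x) - F^* \ge t(\blambda(x))^2 - \omega_*(t\blambda(x))$, which is again a lower bound, not an upper bound. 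So this direction of argument does not give the second inequality; instead the second inequality must come from applying \eqref{f-lowbd} (the lower bound on $f$) at $x$ evaluated at $y = x^*$, rearranged to isolate $F(x) - F^*$ from above.

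Let me reconsider: apply \eqref{f-lowbd} with base point $x$ and test point $x^*$: $f(x^*) \ge f(x) + \langle \nabla f(x), x^* - x\rangle + \omega(\|x^*-x\|_x)$. Add $g(x^*) \ge g(x) + \langle s_g, x^* - x\rangle$. Then $F^* \ge F(x) + \langle s, x^* - x\rangle + \omega(\|x^*-x\|_x)$ where $s = \nabla f(x) + s_g \in \partial F(x)$; choosing $s$ to minimize $\|s\|^*_x$ and using \eqref{uv-product1}, $\langle s, x^* - x\rangle \ge -\blambda(x)\|x^*-x\|_x$. Setting $r = \|x^*-x\|_x$ gives $F^* \ge F(x) + \omega(r) - \blambda(x)\, r$, i.e. $F(x) - F^* \le \blambda(x)\, r - \omega(r) \le \sup_{r\ge 0}\{\blambda(x)\, r - \omega(r)\}$. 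By the conjugacy relation \eqref{conj} — specifically $\omega$ and $\omega_*$ are convex conjugates — the supremum $\sup_{r}\{\tau r - \omega(r)\}$ equals $\omega_*(\tau)$ for $\tau = \blambda(x) \in [0,1)$, which is exactly the claimed bound $F(x) - F^* \le \omega_*(\blambda(x))$.

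The main obstacle I anticipate is being careful about the domain and finiteness issues in the conjugacy step: one must ensure the supremum over $r$ is actually attained at the interior point $r^\star = \blambda(x)/(1-\blambda(x))$ (valid since $\blambda(x)<1$, which is why that hypothesis is needed), that $x^* \in \dom(f)$ so $\|x^*-x\|_x$ is finite and \eqref{f-lowbd} is applicable in the stated form, and that $\partial F(x) = \nabla f(x) + \partial g(x)$ so the decomposition $s = \nabla f(x) + s_g$ is legitimate (this uses $x \in \dom(F)$, smoothness of $f$ there, and a standard sum rule for subdifferentials). Everything else is routine manipulation of $\omega$, $\omega_*$ and the Cauchy–Schwarz-type bound \eqref{uv-product1}.
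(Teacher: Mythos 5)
Your proposal is correct and follows essentially the same route as the paper: the first inequality comes from \eqref{f-lowbd} at $x^*$ combined with convexity of $g$ and the optimality condition $-\nabla f(x^*)\in\partial g(x^*)$, and the second comes from the global lower bound $F(y)\ge F(x)+\langle s,y-x\rangle+\omega(\|y-x\|_x)$ for the minimizing $s\in\partial F(x)$, Cauchy--Schwarz in the local norm, and the conjugacy relation \eqref{conj}. The only cosmetic difference is that the paper minimizes the lower bound over all $y$ whereas you evaluate it at $y=x^*$ and then take the supremum over $r=\|x^*-x\|_x$; your discarded first attempt for the upper bound is correctly recognized as going the wrong way and does not affect the final argument.
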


\begin{proof}
Since $x^*$ is the optimal solution of problem \eqref{self-concord}, we have
$-\nabla f(x^*) \in \partial g(x^*)$. This together with the convexity of $g$ implies
$g(x) \ge g(x^*) + \langle -\nabla f(x^*), x-x^* \rangle$.
Also, by \eqref{f-lowbd}, one has
\[
f(x) \ge f(x^*) + \langle \nabla f(x^*), x-x^* \rangle + \omega(\|x-x^*\|_{x^*}).
\]
Summing up these two inequalities yields the first inequality of \eqref{opt-gap}.

Suppose $\blambda(x)<1$. We now prove the second inequality of \eqref{opt-gap}.
Indeed, by  \eqref{f-lowbd}, one has
\[
f(y) \ge f(x) + \langle \nabla f(x), y-x \rangle + \omega(\|y-x\|_x),\quad \forall y.
\]
By \eqref{blambda}, there exists $s\in\partial F(x)$ such that $\|s\|^*_x = \blambda(x)<1$.
Clearly, $s-\nabla f(x) \in \partial g(x)$. In view of this
and the convexity of $g$, we have
\[
g(y) \ge g(x)+\langle s-\nabla f(x), y-x \rangle, \quad \forall y.
\]
Summing up these two inequalities gives
\[
F(y) \ge F(x) + \langle s, y-x \rangle + \omega(\|y-x\|_x), \quad \forall y.
\]
It then follows from this, \eqref{uv-product1} and  \eqref{conj} that
\[
\ba{lcl}
F^* &=& \min\limits_y F(y) \ \ge \ \min\limits_y \left\{F(x) + \langle s, y-x \rangle + \omega(\|y-x\|_x)\right\},
\\ [6pt]
& \ge &  \min\limits_y \left\{F(x) - \|s\|_x^* \cdot \|y-x\|_x + \omega(\|y-x\|_x)\right\},  \\ [10pt]
& \ge & F(x) - \omega_*(\|s\|^*_x) =  F(x) -\omega_*(\blambda(x)),
\ea
\]
where the last inequality uses \eqref{conj}. Thus the second inequality of \eqref{opt-gap} holds.
\end{proof}

\vgap

For the further discussion, we denote by $\td(x)$ and $\tlambda(x)$  the (exact) proximal Newton direction and its local norm at $x\in\dom(F)$, that is,
\beqa
\td(x) &:=& \arg\min\limits_d \left\{f(x) + \langle \nabla f(x), d\rangle + \frac12 \langle d,\nabla^2 f(x) d\rangle
+ g(x+d)\right\}, \label{td} \\
\tlambda(x) &:=& \|\td(x)\|_x. \label{tlambda}
\eeqa

The following result provides an estimate on the reduction of the objective value
resulted from the exact proximal damped Newton step.

\begin{lemma}
Let $x\in\dom(F)$, $\td(x)$  and $\tlambda(x)$ be defined respectively in \eqref{td}
and \eqref{tlambda}, and $\tx = x + \td(x)/(1+\tlambda(x))$. Then
\beqa
F(\tx) \le F(x) - \omega(\tlambda(x)), \label{Newton-reduct} \\ [5pt]
F(x) - F^* \ge  \omega(\tlambda(x)). \label{low-bdd1}
\eeqa
\end{lemma}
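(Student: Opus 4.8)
\medskip
\noindent\textbf{Proof plan.} The idea is that \eqref{Newton-reduct} is precisely the $\eta=0$ instance of Lemma~\ref{thm:descent}, and that \eqref{low-bdd1} then follows for free from $F(\tx)\ge F^*$. First I would note that the exact proximal Newton direction $\td(x)$ in \eqref{td} is characterized by the first-order optimality condition
\[
0 \in \nabla f(x) + \nabla^2 f(x)\,\td(x) + \partial g\big(x+\td(x)\big),
\]
so it satisfies \eqref{v-subgrad0} with $\hv=0$, and hence $\|\hv\|^*_x\le\eta\|\td(x)\|_x$ holds trivially for $\eta=0$. I would also record that $\tx=x+\td(x)/(1+\tlambda(x))$ lies in $\dom(F)$: since $\|\td(x)/(1+\tlambda(x))\|_x=\tlambda(x)/(1+\tlambda(x))<1$, the point $\tx$ lies in the Dikin ellipsoid of $f$ at $x$ and thus in $\dom(f)$; and as $\tx$ lies on the segment joining $x\in\dom(g)$ to $x+\td(x)\in\dom(g)$, convexity of $\dom(g)$ gives $\tx\in\dom(g)$.

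For \eqref{Newton-reduct} I would then simply replay the chain of estimates in the proof of Lemma~\ref{thm:descent} with $\hd=\td(x)$, $\hlambda=\tlambda(x)$, $\hv=0$: bound $f(\tx)$ by \eqref{f-upbd}, bound $g(\tx)$ using convexity of $g$ exactly as in \eqref{g-ineq}, sum the two, and use the optimality condition above to rewrite $\langle\nabla f(x)+s,\td(x)\rangle=-\langle\nabla^2 f(x)\td(x),\td(x)\rangle=-\tlambda(x)^2$. This gives
\[
F(\tx)\le F(x)-\frac{\tlambda(x)^2}{1+\tlambda(x)}+\omega_*\!\Big(\frac{\tlambda(x)}{1+\tlambda(x)}\Big),
\]
and since $\omega'(\tlambda(x))=\tlambda(x)/(1+\tlambda(x))$, the conjugacy identity in \eqref{conj} collapses the last two terms to $-\omega(\tlambda(x))$. (Equivalently, one may just quote \eqref{F-descent-0} from the proof of Lemma~\ref{thm:descent} and set $\eta=0$.)

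Finally, \eqref{low-bdd1} is immediate: because $\tx\in\dom(F)$ we have $F(\tx)\ge F^*$, and combining this with \eqref{Newton-reduct} yields $F^*\le F(x)-\omega(\tlambda(x))$, i.e.\ $F(x)-F^*\ge\omega(\tlambda(x))$. There is no real obstacle here; the only points requiring a word of care are checking $\tx\in\dom(F)$ (so that the step is well defined and $F(\tx)\ge F^*$ is meaningful) and the bookkeeping with the conjugate pair $(\omega,\omega_*)$, both of which are routine and already carried out in the proof of Lemma~\ref{thm:descent}.
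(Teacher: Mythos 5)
Your proposal is correct. The only difference from the paper is one of packaging: the paper disposes of \eqref{Newton-reduct} in a single line by citing \cite[Theorem 5]{TrKyCe15}, and then obtains \eqref{low-bdd1} from $F(\tx)\ge F^*$ exactly as you do. You instead give a self-contained derivation by rerunning the argument of Lemma~\ref{thm:descent} with $\hd=\td(x)$, $\hv=0$, $\eta=0$, which is legitimate: the exact minimizer in \eqref{td} satisfies \eqref{v-subgrad0} with $\hv=0$, and you correctly observe that one must stop at the intermediate estimate \eqref{F-descent-0} (where the error term $\eta\hlambda^2/(1+\hlambda)$ vanishes for $\eta=0$) rather than invoke the final conclusion of Lemma~\ref{thm:descent}, which would only yield the weaker bound $F(\tx)\le F(x)-\frac12\omega(\tlambda(x))$. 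Your extra care in verifying $\tx\in\dom(F)$ (Dikin ellipsoid for $\dom(f)$, convexity for $\dom(g)$) is a point the paper leaves implicit. What your route buys is that the lemma becomes independent of the external reference, at the cost of repeating computations already present in the proof of Lemma~\ref{thm:descent}; mathematically the two arguments are the same standard damped-Newton decrease estimate.
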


\begin{proof}
The relation \eqref{Newton-reduct} follows from \cite[Theorem 5]{TrKyCe15}. In
addition, the relation \eqref{low-bdd1} holds due to  \eqref{Newton-reduct} and  $F(\tx)  \ge F^*$.
\end{proof}

\gap

Throughout the remainder of the paper, let $d_i(x)$ be an approximate solution of
the problem
\beq \label{subprob}
\min_{d_i} \left\{f(x) + \langle \nabla_i f(x), d_i\rangle + \frac12 \langle d_i,
\nabla^2_{ii} f(x), d_i \rangle  +g_i(x_i+d_i) \right\},
\eeq
which satisfies the following conditions:
\beqa
&-v_i \in \nabla_i f(x) + \nabla^2_{ii} f(x) d_i(x) + \partial g_i(x_i+d_i(x)), \label{v-subgrad} \\ [6pt]
&\|v_i\|^*_{x_i} \le \eta \|d_i(x)\|_{x_i} \label{vi}
\eeqa
for some $v_i$ and $\eta\in[0, 1/4]$. Define
\beqa
&&d(x) :=(d_1(x), \ldots, d_n(x)),  \quad\quad v :=(v_1,\ldots,v_n), \label{dx-1} \\
&&\lambda_i(x) := \|d_i(x)\|_{x_i}, \ i=1,\ldots, n, \label{lambdai} \\
&& H(x) := \Diag(\nabla^2_{11}f(x),\ldots,\nabla^2_{n n} f(x)), \label{Hx}
\eeqa
where $H(x)$ is a block diagonal matrix, whose diagonal blocks are  $\nabla^2_{11}f(x), \ldots,  \nabla^2_{n n} f(x)$. It then follows that
\beq \label{v-subgrad-full}
-(\nabla f(x)+v+H(x)d(x)) \in \partial g(x+d(x)).
\eeq

The following result builds some relationship between $\|d(x)\|_{H(x)}$ and  $\sum^n_{i=1} \lambda_i(x)$.

\begin{lemma}
Let $x\in\dom(F)$, $d(x)$, $\lambda_i(x)$ and  $H(x)$ be defined in \eqref{dx-1},  \eqref{lambdai} and \eqref{Hx}, respectively. Then
\beq \label{d-lambda}
\frac{1}{\sqrt{n}} \sum^n_{i=1} \lambda_i(x) \le \|d(x)\|_{H(x)} \le \sum^n_{i=1} \lambda_i(x).
\eeq
\end{lemma}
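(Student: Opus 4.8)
The plan is to work from the definition $\|d(x)\|_{H(x)}^2 = \langle d(x), H(x)d(x)\rangle$ and exploit the block-diagonal structure of $H(x)$. Since $H(x)=\Diag(\nabla^2_{11}f(x),\ldots,\nabla^2_{nn}f(x))$ and $d(x)=(d_1(x),\ldots,d_n(x))$, the quadratic form decouples completely:
\[
\|d(x)\|_{H(x)}^2 = \sum_{i=1}^n \langle d_i(x), \nabla^2_{ii}f(x)\, d_i(x)\rangle = \sum_{i=1}^n \lambda_i(x)^2,
\]
using \eqref{lambdai} for the last equality. So the entire lemma reduces to the purely elementary inequality
\[
\frac{1}{\sqrt n}\sum_{i=1}^n a_i \;\le\; \Big(\sum_{i=1}^n a_i^2\Big)^{1/2} \;\le\; \sum_{i=1}^n a_i
\]
for nonnegative reals $a_i=\lambda_i(x)\ge 0$.

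For the upper bound $\big(\sum a_i^2\big)^{1/2}\le \sum a_i$, I would square both sides: $\sum a_i^2 \le \big(\sum a_i\big)^2 = \sum a_i^2 + 2\sum_{i<j}a_i a_j$, which holds since all cross terms $a_i a_j$ are nonnegative. For the lower bound $\frac{1}{\sqrt n}\sum a_i \le \big(\sum a_i^2\big)^{1/2}$, I would apply the Cauchy–Schwarz inequality to the vectors $(a_1,\ldots,a_n)$ and $(1,\ldots,1)$, giving $\sum a_i = \langle a, \mathbf{1}\rangle \le \|a\|\,\|\mathbf{1}\| = \sqrt n\,\big(\sum a_i^2\big)^{1/2}$; rearranging yields the claim.

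There is essentially no main obstacle here — the only thing to get right is the observation that block-diagonality makes the $H(x)$-norm split as a sum of the squared block local norms $\lambda_i(x)^2$, after which it is two lines of standard scalar inequalities. I would simply state the decoupling identity, cite \eqref{lambdai}, and then invoke Cauchy–Schwarz and nonnegativity of cross terms to finish.
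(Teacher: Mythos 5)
Your proposal is correct and follows essentially the same route as the paper: both reduce $\|d(x)\|_{H(x)}^2$ to $\sum_i \lambda_i(x)^2$ via the block-diagonal structure of $H(x)$ and then apply the standard $\ell_1$--$\ell_2$ norm comparison on $\Re^n$. The only difference is that you spell out the Cauchy--Schwarz and cross-term justifications that the paper leaves implicit.
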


\begin{proof}
By \eqref{xi-norm}, \eqref{M-norm}, \eqref{dx-1} and \eqref{Hx}, one has
\[
\|d(x)\|_{H(x)} = \sqrt{\sum^n\limits_{i=1} \left\|(\nabla^2_{ii}f(x))^{\frac12} d_i(x)\right\|^2}
\ge \frac{1}{\sqrt{n}} \sum^n\limits_{i=1} \left\|(\nabla^2_{ii}f(x))^{\frac12} d_i(x)\right\|
= \frac{1}{\sqrt{n}}  \sum^n\limits_{i=1} \lambda_i(x),
\]
\[
\sum^n\limits_{i=1} \lambda_i(x) = \sum^n\limits_{i=1}\left \|(\nabla^2_{ii}f(x))^{\frac12} d_i(x)\right\| \ge  \sqrt{\sum^n\limits_{i=1} \left\|(\nabla^2_{ii}f(x))^{\frac12} d_i(x)\right\|^2} = \|d(x)\|_{H(x)}.
\]
\end{proof}

\vgap

The following lemma builds some relationship between $\|d(x)\|_{H(x)}$ and
$\|\td(x)\|_x$.

\begin{lemma}
Let $x\in\dom(F)$, $\td(x)$, $d(x)$ and $H(x)$ be defined in \eqref{td}, \eqref{dx-1} and \eqref{Hx}, respectively. Then
\beqa
\|d(x)\|_{H(x)} &\le & \frac{ \|\td(x)\|_x}{1-\eta} \left((1+\eta)\|H(x)^{\frac12}(\nabla^2 f(x))^{-\frac12}\| +
\|H(x)^{-\frac12}(\nabla^2 f(x))^{\frac12}\|\right), \label{dH} \\
 \|\td(x)\|_x &\le&   \left((1+\eta)\|H(x)^{\frac12}(\nabla^2 f(x))^{-\frac12}\| +
\|H(x)^{-\frac12}(\nabla^2 f(x))^{\frac12}\|\right) \|d(x)\|_{H(x)}. \label{tdH}
\eeqa
\end{lemma}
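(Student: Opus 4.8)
The plan is to obtain both inequalities at once from a single consequence of the monotonicity of $\partial g$. Throughout write $A:=\nabla^2 f(x)$, $H:=H(x)$, $\td:=\td(x)$, $d:=d(x)$, and let $v$ be as in \eqref{dx-1}, so that $\|\td\|_x=\|\td\|_A$ and, by the block-diagonal form of $H$, $\|d\|_H^2=\sum_i\|d_i(x)\|_{x_i}^2=\sum_i\lambda_i(x)^2$. First I would record the two subgradient relations: the optimality condition for the exact subproblem \eqref{td} gives $-\nabla f(x)-A\td\in\partial g(x+\td)$, while \eqref{v-subgrad-full} gives $-\nabla f(x)-v-Hd\in\partial g(x+d)$. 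Applying monotonicity of $\partial g$ at the points $x+\td$ and $x+d$ and expanding the inner product yields
\[
\|\td\|_A^2+\|d\|_H^2 \;\le\; \langle A\td,\,d\rangle+\langle Hd,\,\td\rangle+\langle v,\,\td-d\rangle .
\]

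Next I would bound each term on the right by a multiple of $\|\td\|_A\|d\|_H$ (plus, for the last term, a multiple of $\|d\|_H^2$), inserting the factors $A^{1/2}H^{-1/2}$ or $H^{1/2}A^{-1/2}$ so that exactly the norms appearing in the statement emerge. Concretely, $\langle A\td,d\rangle=\langle A^{1/2}\td,(A^{1/2}H^{-1/2})(H^{1/2}d)\rangle\le\|H^{-1/2}A^{1/2}\|\,\|\td\|_A\|d\|_H$ (using $\|M\|=\|M^T\|$ for the spectral norm), and similarly $\langle Hd,\td\rangle\le\|H^{1/2}A^{-1/2}\|\,\|\td\|_A\|d\|_H$ as well as $\|\td\|_H\le\|H^{1/2}A^{-1/2}\|\,\|\td\|_A$. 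For the inexactness term, the block conditions \eqref{vi} together with the block-diagonal form of $H$ and separability of $g$ give $\|v\|^*_H=\bigl(\sum_i(\|v_i\|^*_{x_i})^2\bigr)^{1/2}\le\eta\bigl(\sum_i\lambda_i(x)^2\bigr)^{1/2}=\eta\|d\|_H$; hence, by \eqref{uv-product2}, $\langle v,\td-d\rangle\le\|v\|^*_H(\|\td\|_H+\|d\|_H)\le\eta\|H^{1/2}A^{-1/2}\|\,\|\td\|_A\|d\|_H+\eta\|d\|_H^2$. Collecting, and writing $C:=(1+\eta)\|H^{1/2}A^{-1/2}\|+\|H^{-1/2}A^{1/2}\|$ for the constant in the statement, this produces
\[
\|\td\|_A^2+(1-\eta)\|d\|_H^2 \;\le\; C\,\|\td\|_A\|d\|_H .
\]

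Finally, since $\eta\in[0,1/4]$ we have $1-\eta>0$, so both terms on the left are nonnegative. Discarding $(1-\eta)\|d\|_H^2$ and cancelling one factor $\|\td\|_A$ (the case $\|\td\|_A=0$ being trivial) gives \eqref{tdH}; discarding $\|\td\|_A^2$ and cancelling one factor $\|d\|_H$ (the case $\|d\|_H=0$ forcing $\td=0$, so that both inequalities hold trivially) gives \eqref{dH}.

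The argument is essentially routine bookkeeping. The only steps requiring care are: (i) invoking monotonicity of $\partial g$ with the correct two subgradient representatives and expanding cleanly; (ii) choosing the right insertions of $A^{\pm1/2}H^{\mp1/2}$ so that the mixed inner products are controlled by exactly $\|H^{1/2}A^{-1/2}\|$ and $\|H^{-1/2}A^{1/2}\|$ rather than some looser combination; and (iii) upgrading the per-block inexactness bound \eqref{vi} to the aggregate bound $\|v\|^*_{H(x)}\le\eta\|d(x)\|_{H(x)}$. I do not anticipate a genuine obstacle beyond this.
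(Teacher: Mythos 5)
Your proof is correct and follows essentially the same route as the paper: the same two subgradient inclusions, monotonicity of $\partial g$, the aggregate bound $\|v\|^*_{H}\le\eta\|d\|_{H}$, and the resulting inequality $(1-\eta)\|d\|_H^2+\|\td\|_x^2\le C\,\|d\|_H\|\td\|_x$ from which both claims follow by dropping one nonnegative term. The only cosmetic difference is that you bound $\langle A\td,d\rangle$ and $\langle Hd,\td\rangle$ separately, while the paper bounds $\langle d,(H+\nabla^2 f(x))\td\rangle$ at once and then applies the triangle inequality to $\|H^{-1/2}(H+\nabla^2 f(x))(\nabla^2 f(x))^{-1/2}\|$.
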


\begin{proof}
For convenience, let $d=d(x)$, $\td = \td(x)$, $H=H(x)$ and $\tH = \nabla^2 f(x)$. Then it follows from \eqref{v-subgrad-full} and \eqref{td}  that
\[
\ba{l}
-(\nabla f(x)+v+Hd) \in \partial g(x+d), \\ [5pt]
-(\nabla f(x)+\tH \td) \in \partial g(x+\td).
\ea
\]
In view of these and the monotonicity of $\partial g$, one has
$\langle d-\td, -v-Hd+\tH\td\rangle \ge 0$,
 which together with \eqref{M-norm} and \eqref{uv-product2} implies  that
\beqa
 \|d\|^2_H + \|\td\|^2_{\tH}&\le& \langle v, \td-d \rangle + \langle d, (H+\tH) \td \rangle \nn \\ [5pt]
 &\le & \|v\|^*_H (\|d\|_H+\|\td\|_H) + \|d\|_H \cdot \|\td\|_{\tH} \cdot \|H^{-\frac12}(H+\tH)\tH^{-\frac12}\|. \label{lem-ineq1}
\eeqa
Notice that
\beq \label{lem-ineq2}
\|\td\|_H \le \|H^{\frac12}\tH^{-\frac12}\| \cdot\|\td\|_{\tH}.
\eeq
Let  $H_i=\nabla^2_{ii} f(x)$. Observe that $\|v_i\|^*_{H_i}=\|v_i\|^*_{x_i}$ and $\|d_i\|_{H_i}=\|d_i\|_{x_i}$. These
and \eqref{vi} yield $\|v_i\|^*_{H_i} \le \eta \|d_i\|_{H_i}$. In view of this and \eqref{Hx}, one has
\beq
\|v\|^*_H = \sqrt{\sum_i (\|v_i\|^*_{H_i})^2} \le \sqrt{\sum_i \eta^2 \|d_i\|^2_{H_i}} = \eta \|d\|_H. \label{v-norm}
\eeq
It follows from this, \eqref{lem-ineq1} and \eqref{lem-ineq2} that
\beqa
 \|d\|^2_H + \|\td\|^2_{\tH}& \le & \eta \|d\|_H\left (\|d\|_H+\|H^{\frac12}\tH^{-\frac12}\| \cdot\|\td\|_{\tH}\right) + \|d\|_H \cdot \|\td\|_{\tH} \cdot
\|H^{-\frac12}(H+\tH)\tH^{-\frac12}\|,  \nn \\
&\le& \eta \|d\|^2_H + \left((1+\eta) \|H^{\frac12}\tH^{-\frac12}\| +
\|H^{-\frac12}\tH^{\frac12}\|\right) \|d\|_H \cdot\|\td\|_{\tH}, \label{d-td-ineq}
\eeqa
where the second inequality uses the relation
\[
\|H^{-\frac12}(H+\tH)\tH^{-\frac12}\| \le \|H^{\frac12}\tH^{-\frac12}\| + \|H^{-\frac12}\tH^{\frac12}\|.
\]
Clearly, \eqref{d-td-ineq} is equivalent to
\[
(1-\eta) \|d\|^2_H + \|\td\|^2_{\tH} \le  \left((1+\eta) \|H^{\frac12}\tH^{-\frac12}\| +
\|H^{-\frac12}\tH^{\frac12}\|\right) \|d\|_H \cdot\|\td\|_{\tH}.
\]
This, along with $d=d(x)$, $\td = \td(x)$, $H=H(x)$, $\tH = \nabla^2 f(x)$ and
$\|\td\|_x = \|\td\|_{\tH}$, yields \eqref{dH} and \eqref{tdH}.
\end{proof}

\vgap

The following results will be used subsequently to study the convergence of RBPDN.

\begin{lemma}
Let $\cS(x^0)$, $\sigma_f$, $\td(x)$, $d(x)$, $\lambda_i(x)$ and $H(x)$ be defined in
\eqref{S0}, \eqref{sigma-f}, \eqref{td}, \eqref{dx-1}, \eqref{lambdai} and \eqref{Hx},
respectively. Then
\bi
\item[(i)] $\cS(x^0)$ is a nonempty convex compact set.
\item[(ii)]
\beq \label{x-td}
\|x-x^*\|  \le 2 (L_f/\sigma_f) \|\td(x)\|, \quad \forall x\in \cS(x^0),
\eeq
where
\beq  \label{L}
L_f = \max\limits_{x\in\cS(x^0)} \|\nabla^2 f(x)\|.
\eeq
\item[(iii)]
\beq \label{Fgap-lowbd}
F(x) - F^* \ge \omega\left(c_1 \sum^n_{i=1}\lambda_i(x)\right), \quad \forall x\in \cS(x^0),
\eeq
where
\beq \label{c1}
c_1 = \frac{1-\eta}{\sqrt{n}\max\limits_{x\in\cS(x^0)}\left\{(1+\eta)\|H(x)^{\frac12}(\nabla^2 f(x))^{-\frac12}\| +
\|H(x)^{-\frac12}(\nabla^2 f(x))^{\frac12}\|\right\}}.
\eeq
\item[(iv)]
\beq \label{td-d-norm}
\|\td(x)\| \le  \frac{1-\eta}{c_1\sqrt{n\sigma_f}} \|d(x)\|_{H(x)}, \quad \forall x\in \cS(x^0).
\eeq
\item[(v)]
\beq \label{td-norm}
\|\td(x)\| \le \frac{1-\eta}{c_1\sqrt{n\sigma_f}}  \sum^n_{i=1} \lambda_i(x), \quad \forall x\in \cS(x^0).
\eeq
\ei
\end{lemma}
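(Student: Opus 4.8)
The plan is to establish the five parts in the order listed, since (i) supplies the compactness needed to make $\sigma_f$, $L_f$ and $c_1$ well-defined, after which (ii)--(v) follow from the technical lemmas already proved. For (i): $\cS(x^0)$ contains $x^0$, is convex by convexity of $F$, and is closed because $F$ is a proper closed convex function; the one substantive point is boundedness, which I would obtain from the first inequality of \eqref{opt-gap}: every $x\in\cS(x^0)$ satisfies $\omega(\|x-x^*\|_{x^*})\le F(x)-F^*\le F(x^0)-F^*$, and since $\omega$ is strictly increasing on $[0,\infty)$ with $\omega(t)\to\infty$, this bounds $\|x-x^*\|_{x^*}$, hence (via $\nabla^2 f(x^*)\succ 0$) bounds $\|x-x^*\|$. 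Compactness then makes the extrema in \eqref{sigma-f}, \eqref{L} and \eqref{c1} attained, because the relevant functions of $x$ are continuous and $\nabla^2 f(x)$ --- and therefore its diagonal blocks, which form $H(x)$ --- is positive definite throughout $\cS(x^0)$; so $\sigma_f,L_f>0$ and $c_1>0$.

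For (ii), the idea is to play the two optimality conditions against each other. With $d=\td(x)$, the optimality condition for \eqref{td} gives $-(\nabla f(x)+\nabla^2 f(x)d)\in\partial g(x+d)$, and $-\nabla f(x^*)\in\partial g(x^*)$ by optimality of $x^*$; monotonicity of $\partial g$ then yields $\langle (x-x^*)+d,\ \nabla f(x^*)-\nabla f(x)-\nabla^2 f(x)d\rangle\ge 0$. Since the segment $[x^*,x]$ lies in the convex set $\cS(x^0)$, I would write $\nabla f(x)-\nabla f(x^*)=G(x-x^*)$ with $G=\int_0^1\nabla^2 f(x^*+t(x-x^*))\,dt$, so that $\sigma_f I\preceq G\preceq L_f I$. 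Expanding the inner product, discarding the nonnegative term $\langle d,\nabla^2 f(x)d\rangle$, bounding the two cross terms by Cauchy--Schwarz together with $\|G\|\le L_f$ and $\|\nabla^2 f(x)\|\le L_f$, and using $\langle x-x^*,G(x-x^*)\rangle\ge\sigma_f\|x-x^*\|^2$, should leave $\sigma_f\|x-x^*\|^2\le 2L_f\|d\|\,\|x-x^*\|$, which is exactly \eqref{x-td}. This is the only part not obtained by pure chaining, and I expect the delicate point to be arranging the cancellations so that the curvature term and both cross terms get absorbed into a single clean quadratic inequality in $\|x-x^*\|$.

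Parts (iii)--(v) then come from combining the technical lemmas. For (iii): \eqref{low-bdd1} gives $F(x)-F^*\ge\omega(\tlambda(x))=\omega(\|\td(x)\|_x)$; chaining the lower bound in \eqref{d-lambda} with \eqref{dH} gives $\tfrac{1}{\sqrt n}\sum_i\lambda_i(x)\le\|d(x)\|_{H(x)}\le\tfrac{1}{1-\eta}\big((1+\eta)\|H(x)^{1/2}(\nabla^2 f(x))^{-1/2}\|+\|H(x)^{-1/2}(\nabla^2 f(x))^{1/2}\|\big)\|\td(x)\|_x$, which by the definition \eqref{c1} of $c_1$ yields $\|\td(x)\|_x\ge c_1\sum_i\lambda_i(x)$, and monotonicity of $\omega$ gives \eqref{Fgap-lowbd}. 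For (iv): $\nabla^2 f(x)\succeq\sigma_f I$ on $\cS(x^0)$ gives $\|\td(x)\|_x\ge\sqrt{\sigma_f}\,\|\td(x)\|$, while \eqref{tdH} combined with \eqref{c1} (which bounds the bracketed quantity above by $(1-\eta)/(\sqrt n\,c_1)$ on $\cS(x^0)$) gives $\|\td(x)\|_x\le\tfrac{1-\eta}{\sqrt n\,c_1}\|d(x)\|_{H(x)}$; putting these together yields \eqref{td-d-norm}. Finally, (v) is immediate from \eqref{td-d-norm} and the upper bound $\|d(x)\|_{H(x)}\le\sum_i\lambda_i(x)$ in \eqref{d-lambda}.
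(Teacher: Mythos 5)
Your proposal is correct and follows essentially the same route as the paper's own proof: boundedness of $\cS(x^0)$ via the first inequality of \eqref{opt-gap}, part (ii) by playing the optimality conditions for \eqref{td} and for $x^*$ against each other through monotonicity of $\partial g$ (your integral form $G$ just makes explicit the bounds $\sigma_f\|x-x^*\|^2\le\langle x-x^*,\nabla f(x)-\nabla f(x^*)\rangle$ and $\|\nabla f(x)-\nabla f(x^*)\|\le L_f\|x-x^*\|$ that the paper uses directly), and parts (iii)--(v) by chaining \eqref{low-bdd1}, \eqref{d-lambda}, \eqref{dH}, \eqref{tdH} and \eqref{td-2norms} exactly as the paper does.
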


\begin{proof}
(i) Clearly, $\cS(x^0) \neq \emptyset$ due to $x^0 \in \cS(x^0)$.
By \eqref{S0} and the first inequality of \eqref{opt-gap}, one can observe that
$\cS(x^0) \subseteq
 \left\{x: \omega(\|x-x^*\|_{x^*}) \le F(x^0)-F^*\right\}$.
This together with the strict monotonicity of  $\omega$ in $[0,\infty)$ implies that
$\cS(x^0)$ is a bounded set. In addition, we know that $F$ is a closed convex function. Hence, $\cS(x^0)$ is closed and convex.

(ii) By Assumption \ref{assump}, we know that $\nabla^2 f$ is continuous and positive
definite in $\dom(F)$. It follows from this and the compactness of $\cS(x^0)$ that
$\sigma_f$ and $L_f$ are well-defined in \eqref{sigma-f} and \eqref{L} and
moreover they are positive. For convenience, let $\td = \td(x)$ and
$\tH = \nabla^2 f(x)$. By the optimality condition of \eqref{self-concord} and
\eqref{td}, one has
\[
-(\nabla f(x)+\tH \td) \in \partial g(x+\td),  \quad\quad
-\nabla f(x^*) \in \partial g(x^*),
\]
which together with the monotonicity of $\partial g$ yield
\[
\langle x+\td - x^*, -\nabla f(x) -\tH \td + \nabla f(x^*) \rangle \ge 0.
\]
Hence, we have that for all $x\in\cS(x^0)$,
\[
\ba{lcl}
\sigma_f  \|x- x^*\|^2 &\le&  \langle x- x^*, \nabla f(x) - \nabla f(x^*) \rangle
\ \le  \ -\langle \td, \nabla f(x) - \nabla f(x^*) \rangle -  \langle x-x^*, \tH \td  \rangle \\ [6pt]
&\le &  \|\nabla f(x) - \nabla f(x^*)\| \cdot \|\td\|  +   \|\tH\| \cdot \|x-x^*\| \cdot \|\td\| \ \le \  2L_f \|x-x^*\| \cdot \|\td\|,
\ea
\]
which immediately implies \eqref{x-td}.

(iii) In view of \eqref{tlambda}, \eqref{d-lambda}, \eqref{dH} and \eqref{c1}, one can observe that
\[
\tlambda(x) = \|\td(x)\|_x \ge c_1 \sum^n\limits_{i=1} \lambda_i(x), \quad \forall x\in \cS(x^0),
\]
which, together with \eqref{low-bdd1} and the monotonicity of $\omega$ in $[0,\infty)$,  implies that \eqref{Fgap-lowbd} holds.

(iv) One can observe that
\beq \label{td-2norms}
\|\td(x)\| \le \left\|(\nabla^2 f(x))^{-\frac12}\right\| \cdot \|\td(x)\|_x \le \frac{1}{\sqrt{\sigma_f}} \|\td(x)\|_x, \quad \forall x\in \cS(x^0),
\eeq
where the last inequality is due to \eqref{sigma-f}. This, \eqref{tdH} and \eqref{c1} lead to \eqref{td-d-norm}.

(v) The relation \eqref{td-norm} follows from \eqref{d-lambda} and \eqref{td-d-norm}.
\end{proof}

\section{Convergence results}
\label{converge}

In this section we establish some convergence results for RBPDN. In particular, we show
in Subsection \ref{global-converg} that if $g$ is Lipschitz continuous in $\cS(x^0)$, then
RBPDN is globally convergent.  In Subsection \ref{local-linear}, we show that RBPDN enjoys a local linear convergence. In Subsection \ref{global-linear}, we show that for a
class of $g$ including the case where $g$ is smooth (but not necessarily self-concordant)
and $\nabla g$ is Lipschitz continuous in $\cS(x^0)$, RBPDN enjoys a global linear
convergence.

\subsection{Global convergence}
\label{global-converg}

In this subsection we study the global convergence of RBPDN.  To proceed, we first
establish a certain reduction on the objective values over every two consecutive iterations.

\begin{lemma}
Let $\{x^k\}$ be generated by RBPDN. Then
\beq \label{F-reduct}
\bE_{\i}[F(x^{k+1})]  \le  F(x^k) - \frac12 \omega\left(p_{\min}\sum\limits_{i=1}^n\lambda_i(x^k)\right), \quad\quad k \ge 0,
\eeq
where $\lambda_i(\cdot)$ is defined in \eqref{lambdai} and
\beq \label{pmin}
p_{\min} := \min_{1 \le i \le n} p_i.
\eeq
\end{lemma}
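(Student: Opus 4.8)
The plan is to analyze a single iteration of RBPDN and then take the conditional expectation over the random block index $\i$. When block $\i$ is picked, the update $x^{k+1} = x^k + d_\i(x^k)/(1+\lambda_\i(x^k))$ changes only the $\i$-th block, so the objective reduction is governed by a \emph{block version} of Lemma \ref{thm:descent}. The first step is to observe that restricting $f$ to the $\i$-th coordinate subspace through $x^k$ yields again a standard self-concordant function of the block variable, with Hessian $\nabla^2_{\i\i} f(x^k)$ and gradient $\nabla_\i f(x^k)$, and that $g(x) = \sum_i g_i(x_i)$ splits so that changing only $x_\i$ changes $g$ by exactly $g_\i(x^{k+1}_\i) - g_\i(x^k_\i)$. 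Under this identification, conditions \eqref{d-cond1}--\eqref{d-cond2} are precisely the inexactness conditions \eqref{v-subgrad0} needed for Lemma \ref{thm:descent} applied in the block subspace. Hence Lemma \ref{thm:descent} gives directly
\[
F(x^{k+1}) \le F(x^k) - \tfrac12 \omega(\lambda_\i(x^k)).
\]

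Next I would take the expectation over $\i$, which is chosen with probability $p_\i$ independently of everything up to iteration $k$:
\[
\bE_\i[F(x^{k+1})] \le F(x^k) - \tfrac12 \sum_{i=1}^n p_i\, \omega(\lambda_i(x^k)).
\]
The remaining task is to bound $\sum_i p_i \omega(\lambda_i(x^k))$ from below by $\omega\!\left(p_{\min} \sum_i \lambda_i(x^k)\right)$. This is where convexity of $\omega$ enters: since $\omega(0)=0$ and $\omega$ is convex on $[0,\infty)$, for each $i$ and each $p \in [0,1]$ we have $\omega(p t) \le p\, \omega(t)$ (a standard consequence of convexity applied to the segment from $0$ to $t$), hence $p_i \omega(\lambda_i) \ge p_{\min} \omega(\lambda_i) \ge \omega(p_{\min}\lambda_i)$ once we drop to the smaller probability --- wait, that direction needs care. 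The cleaner route is: $\sum_i p_i \omega(\lambda_i) \ge p_{\min} \sum_i \omega(\lambda_i)$, and then use superadditivity of $\omega$ at $0$, namely $\sum_i \omega(t_i) \ge \omega\!\left(\sum_i t_i\right)$ for $t_i \ge 0$, which again follows from convexity with $\omega(0)=0$ (subadditivity fails for convex functions in general, but \emph{superadditivity} holds for convex functions vanishing at the origin). Combining, $\sum_i p_i \omega(\lambda_i) \ge p_{\min}\, \omega\!\left(\sum_i \lambda_i\right) \ge \omega\!\left(p_{\min} \sum_i \lambda_i\right)$, where the last step uses $\omega(p_{\min} s) \le p_{\min}\omega(s)$ with $p_{\min}\le 1$.

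The main obstacle, and the part deserving the most care, is the chain of elementary inequalities for $\omega$: one must correctly invoke \emph{convexity of $\omega$ together with $\omega(0)=0$} to get both the scaling bound $\omega(\alpha t) \le \alpha\,\omega(t)$ for $\alpha \in [0,1]$ and the superadditivity $\sum_i \omega(t_i) \ge \omega(\sum_i t_i)$, and to chain them in the right order so that the $p_{\min}$ ends up \emph{inside} the argument of $\omega$ as in \eqref{F-reduct}. Convexity of $\omega(t) = t - \ln(1+t)$ is immediate since $\omega''(t) = 1/(1+t)^2 > 0$ on $t > -1$. Everything else --- the block self-concordance identification and the conditional expectation --- is routine bookkeeping, so I would state the block version of Lemma \ref{thm:descent} as an immediate specialization and spend the written proof mostly on the $\omega$ inequalities.
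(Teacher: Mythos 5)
The first half of your argument --- restricting $f$ to the $\i$-th block through $x^k$ to get a standard self-concordant function of the block variable, invoking Lemma \ref{thm:descent} in that subspace to obtain $F(x^{k+1}) \le F(x^k) - \frac12\omega(\lambda_\i(x^k))$, and then taking the conditional expectation over $\i$ --- is exactly the paper's argument and is correct.

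The gap is in your chain of $\omega$-inequalities. You invoke a ``superadditivity'' $\sum_i\omega(t_i) \ge \omega\bigl(\sum_i t_i\bigr)$ for $t_i\ge 0$ as a consequence of convexity with $\omega(0)=0$. This is backwards: a convex function vanishing at the origin satisfies $\omega(s+t)\ge\omega(s)+\omega(t)$ (write $\omega(s)\le\frac{s}{s+t}\,\omega(s+t)$ and $\omega(t)\le\frac{t}{s+t}\,\omega(s+t)$ and add), so in fact $\sum_i\omega(t_i)\le\omega\bigl(\sum_i t_i\bigr)$, the opposite of what you need. Concretely, $\omega(1)+\omega(1)=2(1-\ln 2)\approx 0.614$ while $\omega(2)=2-\ln 3\approx 0.901$. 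Consequently your intermediate step $p_{\min}\sum_i\omega(\lambda_i)\ge p_{\min}\,\omega\bigl(\sum_i\lambda_i\bigr)$ is unjustified (and false in general), and the chain as written does not close --- even though the target inequality itself is true. The repair is the route the paper takes, and it is shorter than your chain: since $\sum_i p_i=1$ and $\omega$ is convex, Jensen's inequality gives directly $\sum_i p_i\,\omega(\lambda_i)\ge\omega\bigl(\sum_i p_i\lambda_i\bigr)$, and then $\sum_i p_i\lambda_i\ge p_{\min}\sum_i\lambda_i$ together with the monotonicity of $\omega$ on $[0,\infty)$ yields $\omega\bigl(\sum_i p_i\lambda_i\bigr)\ge\omega\bigl(p_{\min}\sum_i\lambda_i\bigr)$. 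Your scaling bound $\omega(\alpha t)\le\alpha\,\omega(t)$ for $\alpha\in[0,1]$ is correct but becomes unnecessary once Jensen is applied with the weights $p_i$ themselves rather than with $p_{\min}$.
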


\begin{proof}
Recall that $\i\in \{1,\ldots,n\}$ is randomly chosen at iteration $k$ with probability $p_\i$.  Since $f$ is a standard self-concordant function, it is not hard to observe
that $f(x^k_1,\ldots,x^k_{\i-1},z,x^k_{\i+1},\ldots,x^k_n)$ is also a standard
 self-concordant function of $z$.
In view of this and Lemma \ref{thm:descent} with $F$ replaced by $F(x^k_1,\ldots,x^k_{\i-1},z,x^k_{\i+1},\ldots,x^k_n)$, one can obtain that
\beq \label{F-descent}
F(x^{k+1}) \le F(x^k) - \frac12 \omega(\lambda_{\i}(x^k)).
\eeq
Taking expectation with respect to $\i$  and using the convexity of  $\omega$, one has
\[
\ba{lcl}
\bE_{\i}[F(x^{k+1})] &\le& F(x^k) - \frac12 \sum\limits_{i=1}^n p_i\omega(\lambda_i(x^k))
\le F(x^k) - \frac12 \omega\left(\sum\limits_{i=1}^n p_i \lambda_i(x^k)\right) \nn \\ [10pt]
&\le&  F(x^k) - \frac12 \omega\left(p_{\min}\sum\limits_{i=1}^n\lambda_i(x^k)\right),
\ea
\]
where the last inequality follows from \eqref{pmin} and the monotonicity of $\omega$
in $[0,\infty)$.
\end{proof}

\gap

We next show that under a mild assumption RBPDN is globally convergent.

\begin{theorem} \label{global-converg}
Assume that $g$ is Lipschitz continuous in $\cS(x^0)$. Then
\[
\lim\limits_{k \to \infty} \bE[F(x^k)] = F^*.
\]
\end{theorem}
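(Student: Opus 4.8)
The plan is to extract convergence of the expected optimality gap to zero from the per-iteration descent inequality \eqref{F-reduct} together with the compactness of $\cS(x^0)$ and the Lipschitz continuity of $g$ on it. First I would observe that, by Lemma \ref{thm:descent} (applied to each coordinate block as in the proof of \eqref{F-reduct}), the realized sequence $\{F(x^k)\}$ is monotonically nonincreasing for every sample path, so $\{x^k\} \subseteq \cS(x^0)$ almost surely; hence all the quantities $\sigma_f$, $L_f$, $c_1$ are in force along the iterates. Taking full expectations in \eqref{F-reduct} and using Jensen's inequality for the concave map $t \mapsto \omega(t)$ applied to $\bE[\sum_i \lambda_i(x^k)]$ (or, more directly, monotonicity of $\omega$ together with a telescoping argument), the partial sums $\sum_{k} \bE[\,\omega(p_{\min}\sum_i \lambda_i(x^k))\,]$ are bounded by $2(F(x^0)-F^*)$, which forces $\bE[\,\omega(p_{\min}\sum_i \lambda_i(x^k))\,] \to 0$, and therefore $\bE[\sum_i \lambda_i(x^k)] \to 0$ since $\omega$ is strictly increasing and vanishes only at $0$ (a convexity/continuity argument converts convergence of $\bE[\omega(\cdot)]$ to $0$ into convergence of $\bE[\cdot]$ to $0$).

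Next I would convert the vanishing of $\bE[\sum_i \lambda_i(x^k)]$ into the vanishing of the expected optimality gap. Using the bound \eqref{td-norm}, $\|\td(x^k)\| \le \frac{1-\eta}{c_1\sqrt{n\sigma_f}} \sum_i \lambda_i(x^k)$, and then \eqref{x-td}, $\|x^k - x^*\| \le 2(L_f/\sigma_f)\|\td(x^k)\|$, we get $\bE[\|x^k - x^*\|] \to 0$. Now the Lipschitz continuity of $g$ on the compact set $\cS(x^0)$ is what makes $F$ Lipschitz on $\cS(x^0)$: indeed $\nabla f$ is bounded on $\cS(x^0)$ (by continuity of $\nabla^2 f$ and compactness, or via $L_f$), so $f$ is Lipschitz there, and adding the Lipschitz $g$ gives a constant $L_F$ with $|F(x^k) - F^*| = F(x^k) - F^* \le L_F \|x^k - x^*\|$. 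Taking expectations, $\bE[F(x^k) - F^*] \le L_F \,\bE[\|x^k-x^*\|] \to 0$, which is the claim (and $\bE[F(x^k)] \ge F^*$ trivially, so the limit is exactly $F^*$).

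The main obstacle, and the step deserving the most care, is the passage from $\bE[\,\omega(p_{\min}\sum_i \lambda_i(x^k))\,] \to 0$ to $\bE[\sum_i \lambda_i(x^k)] \to 0$: one cannot simply invert $\omega$ inside the expectation because $\omega^{-1}$ is convex, not concave. The clean way is to note that near $0$, $\omega(t) \ge c\,t^2$ for $t$ in a bounded range (and $\sum_i \lambda_i(x^k)$ is bounded because $\{x^k\} \subseteq \cS(x^0)$ and each $\lambda_i$ is a continuous function on a compact set), so $\bE[\omega(p_{\min}\sum_i\lambda_i)] \gtrsim \bE[(\sum_i \lambda_i)^2] \ge (\bE[\sum_i \lambda_i])^2$ by Jensen, giving $\bE[\sum_i\lambda_i(x^k)] \to 0$. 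Alternatively one can argue along subsequences using uniform integrability. Everything else — the telescoping of the descent inequality, boundedness of the iterates, and the final Lipschitz estimate — is routine once this nonlinearity is handled.
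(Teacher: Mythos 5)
Your proposal follows essentially the same route as the paper: telescope the descent inequality \eqref{F-reduct}, deduce $\bE[\sum_i\lambda_i(x^k)]\to 0$, and then pass to $\bE[F(x^k)]-F^*\to 0$ via the Lipschitz continuity of $F$ on $\cS(x^0)$ combined with \eqref{x-td} and \eqref{td-norm}. The one place you diverge is the step you single out as the main obstacle, and there are two remarks worth making. First, $\omega(t)=t-\ln(1+t)$ is \emph{convex}, not concave as you state; Jensen therefore gives $\bE[\omega(Y)]\ge\omega(\bE[Y])$, which is exactly the direction the paper exploits: it moves $\omega$ \emph{outside} the expectation before telescoping, so the summable quantity is $\omega\bigl(p_{\min}\bE[\sum_i\lambda_i(x^k)]\bigr)$ with a deterministic argument, and the inversion of $\omega$ is then immediate --- the obstacle you describe never arises. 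Had $\omega$ actually been concave, that Jensen step would go the wrong way, so the mislabel matters, but your fallback (telescoping $\bE[\omega(\cdot)]$ directly and then using $\omega(t)\ge c\,t^2$ on a bounded range together with $\bE[Y^2]\ge(\bE[Y])^2$) is a valid, if more laborious, substitute. Second, your justification that $\sum_i\lambda_i(x^k)$ is uniformly bounded ``because each $\lambda_i$ is a continuous function on a compact set'' is not quite right: $d_i(x)$ is only an approximate solution satisfying \eqref{v-subgrad}--\eqref{vi}, hence not a well-defined continuous function of $x$. The bound should instead come from the per-block descent $F(y)\le F(x)-\tfrac12\omega(\lambda_i(x))$ together with $F(y)\ge F^*$, which gives $\lambda_i(x)\le\omega^{-1}(2(F(x^0)-F^*))$ on $\cS(x^0)$ (this is how the paper argues in the local linear convergence proof). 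With these two repairs your argument is correct; the paper's version is simply shorter because the convexity of $\omega$ does the work for you.
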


\begin{proof}
It follows from \eqref{F-reduct} that
\[
\ba{lcl}
\bE[F(x^{k+1})] &\le&  \bE[F(x^k)] - \frac12  \bE\left[\omega\left(p_{\min}\sum\limits_{i=1}^n\lambda_i(x^k)\right)\right] \\ [10pt]
&\le & \bE[F(x^k)] - \frac12  \omega\left(p_{\min}\bE\left[\sum\limits_{i=1}^n\lambda_i(x^k)\right]\right),
\ea
\]
where the last relation follows from Jensen's inequality. Hence, we have
\beq \label{expect-ineq}
0 \le \sum_k\omega\left(p_{\min}\bE
\left[\sum\limits_{i=1}^n\lambda_i(x^k)\right]\right) \le F(x^0) - F^*.
\eeq
Notice from \eqref{omega} that $\omega(t)\ge 0$ for all $t \ge 0$ and $\omega(t)=0$
if and only if $t=0$. This and \eqref{expect-ineq} imply that
\beq \label{lim-lambda}
\lim\limits_{k\to\infty} \bE
\left[\sum\limits_{i=1}^n\lambda_i(x^k)\right] = 0.
\eeq
In view of $x^0\in\cS(x^0)$ and \eqref{F-descent}, one can observe that $x^k \in\cS(x^0)$ for all $k \ge 0$. Due to the continuity of $\nabla f$ and the compactness of $\cS(x^0)$,  one can observe that $f$ is Lipschitz continuous in $\cS(x^0)$.
This along with the assumption of Lipschitz continuity of $g$ in $\cS(x^0)$
implies that $F$ is Lipschitz continuous in $\cS(x^0)$ with some Lipschitz constant
$L_F \ge 0$. Using this, \eqref{x-td} and \eqref{td-norm}, we obtain that for all
$k \ge 0$,
\[
\ba{lcl}
 F(x^k) &\le& F^* +L_F \|x^k-x^*\| \le F^*+ \frac{2L_fL_F}{\sigma_f}\|\td(x^k)\|  \\ [6pt]
& \le&  F^* + \frac{2(1-\eta)L_fL_F}{c_1\sqrt{n}\sigma_f^{3/2}}  \sum^n\limits_{i=1} \lambda_i(x^k),
\ea
\]
where the last two inequalities follow from \eqref{x-td} and \eqref{td-norm},
respectively. This together with \eqref{lim-lambda} and $F(x^k) \ge F^*$
implies that the conclusion holds.
\end{proof}

\subsection{Local linear convergence}
\label{local-linear}


In this subsection we show that RBPDN enjoys a local linear convergence.


\begin{theorem}
Let $\{x^k\}$ be generated by RBPDN.
Suppose $F(x^0) \le F^*+\omega(c_1/p_{\min})$, where
$c_1$ and $p_{\min}$ are defined in  \eqref{c1} and \eqref{pmin}, respectively.  Then
\[
\bE[F(x^k)-F^*] \le \left[\frac{6c_2+ p^2_{\min}(1-\theta)}{6c_2+ p^2_{\min}}  \right]^k(F(x^0)-F^*), \quad\quad \forall k \ge 0,
\]
where
\beqa
c_2 &:=& \left |\theta\left[\left(\frac{L_f}{\sigma_f}\right)^{3/2} \frac{2(1-\eta^2)}{c_1\sqrt{n}}-1\right] +
\left(\frac12+\eta\right)  p_{\max}\right|, \label{c4} \\
p_{\max} &:=& \max\limits_{1\le i\le n} p_i, \quad\quad  \theta := \min_{1 \le i \le n} \inf\limits_{x\in\cS(x^0)}\frac{p_i}{1+\lambda_i(x)} \in (0,1),  \label{pmax}
\eeqa
and $\sigma_f$, $L_f$ and  $c_1$ are defined respectively in \eqref{sigma-f}, \eqref{L} and \eqref{c1}.
\end{theorem}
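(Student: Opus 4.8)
The plan is to combine the one-step decrease from Lemma~\ref{F-gap} of the type \eqref{F-reduct} with an upper bound on $F(x^k)-F^*$ in terms of $\sum_i\lambda_i(x^k)$, so as to get a recursion $\bE[F(x^{k+1})-F^*]\le q\,\bE[F(x^k)-F^*]$ for the explicit $q$ in the statement. The hypothesis $F(x^0)\le F^*+\omega(c_1/p_{\min})$ is there to guarantee, via \eqref{Fgap-lowbd} and the monotonicity of $\omega$, that $p_{\min}\sum_i\lambda_i(x^k)$ stays in the regime (say $\le 1$) where $\omega$ can be bounded below by a quadratic; indeed $F(x^k)\le F(x^0)$ along the iterates (from \eqref{F-descent}, which holds almost surely), so $\omega\big(c_1\sum_i\lambda_i(x^k)\big)\le F(x^k)-F^*\le\omega(c_1/p_{\min})$ forces $\sum_i\lambda_i(x^k)\le 1/p_{\min}$, i.e.\ $p_{\min}\sum_i\lambda_i(x^k)\le 1$. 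This lets me replace $\omega\big(p_{\min}\sum_i\lambda_i(x^k)\big)$ in \eqref{F-reduct} by a lower bound of the form $\mathrm{const}\cdot\big(p_{\min}\sum_i\lambda_i(x^k)\big)^2$, e.g.\ using $\omega(t)\ge t^2/(2(1+t))\ge t^2/4$ for $t\in[0,1]$, or the sharper $\omega(t)\ge \tfrac{1}{6}t^2$ hinted at by the constant $6c_2$.

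The key steps, in order: first, establish the invariant $x^k\in\cS(x^0)$ and $p_{\min}\sum_i\lambda_i(x^k)\le 1$ as above. Second, get an \emph{upper} bound $F(x^k)-F^*\le \Phi\big(\sum_i\lambda_i(x^k)\big)$; the natural route is Lemma~\ref{F-gap}'s second inequality $F(x^k)-F^*\le\omega_*(\blambda(x^k))$ together with a bound on $\blambda(x^k)=\min_{s\in\partial F(x^k)}\|s\|^*_{x^k}$ in terms of $\sum_i\lambda_i(x^k)$. From \eqref{v-subgrad-full}, $-(\nabla f(x^k)+v+H(x^k)d(x^k))\in\partial g(x^k+d(x^k))$, but I need a subgradient \emph{at $x^k$}, not at $x^k+d(x^k)$; so I would instead bound the optimality gap using \eqref{x-td}, \eqref{td-norm} and local strong convexity/Lipschitzness of $\nabla f$ on the compact set $\cS(x^0)$ (constants $\sigma_f,L_f$), giving $F(x^k)-F^*\le \tfrac{L_f}{2}\|x^k-x^*\|^2\le \tfrac{L_f}{2}(2L_f/\sigma_f)^2\|\td(x^k)\|^2\le C\big(\sum_i\lambda_i(x^k)\big)^2$ with $C$ built from $L_f,\sigma_f,c_1,n,\eta$ — this is where the factor $(L_f/\sigma_f)^{3/2}\,2(1-\eta^2)/(c_1\sqrt n)$ inside $c_2$ comes from. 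Third, combine: from \eqref{F-reduct},
\[
\bE_\i[F(x^{k+1})-F^*]\le (F(x^k)-F^*)-\tfrac12\omega\!\big(p_{\min}\textstyle\sum_i\lambda_i(x^k)\big)\le (F(x^k)-F^*)-\tfrac{p_{\min}^2}{12}\big(\textstyle\sum_i\lambda_i(x^k)\big)^2,
\]
and substitute $\big(\sum_i\lambda_i(x^k)\big)^2\ge (F(x^k)-F^*)/C$ to obtain $\bE_\i[F(x^{k+1})-F^*]\le\big(1-\tfrac{p_{\min}^2}{12C}\big)(F(x^k)-F^*)$; taking total expectation and iterating gives the claimed geometric rate, with $q=\tfrac{6c_2+p_{\min}^2(1-\theta)}{6c_2+p_{\min}^2}=1-\tfrac{\theta p_{\min}^2}{6c_2+p_{\min}^2}$ after matching constants ($C$ should turn out to be $c_2/(\theta)$ up to the bookkeeping, which is why $\theta$ and $p_{\max}$ appear).

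The main obstacle I anticipate is the careful bookkeeping to land exactly the constant $c_2$ in \eqref{c4} — in particular, accounting for the factor $\theta=\min_i\inf_x p_i/(1+\lambda_i(x))$, which presumably enters because one actually wants to relate $F(x^{k+1})-F^*$ (not just the decrease) to $\sum_i\lambda_i$, or because one uses $\omega'(\lambda_i)=\lambda_i/(1+\lambda_i)$ and the damped step length $1/(1+\lambda_i)$ more tightly than the crude $\omega(t)\ge t^2/6$ bound; reconciling the $(\tfrac12+\eta)p_{\max}$ term suggests the proof also tracks a one-step relation like $F(x^{k+1})-F^*\le F(x^k)-F^* - \tfrac{\theta}{?}(\cdots)$ plus an error term of size $(\tfrac12+\eta)p_{\max}$ times something, and then uses the absolute value in \eqref{c4} to cover either sign. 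The self-concordance input (the fact that fixing all but the $\i$-th block leaves a standard self-concordant function, already used for \eqref{F-descent}) and Lemmas~\ref{F-gap} and the block/full comparison lemmas \eqref{dH}–\eqref{td-norm} are the only structural tools needed; everything else is estimation on the compact set $\cS(x^0)$.
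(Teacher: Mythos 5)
Your proposal gets the first half of the argument right: the invariant $x^k\in\cS(x^0)$, the use of the hypothesis $F(x^0)\le F^*+\omega(c_1/p_{\min})$ together with \eqref{Fgap-lowbd} to force $p_{\min}\sum_i\lambda_i(x^k)\le 1$, and the quadratic lower bound on $\omega$ (the paper uses $\omega(t)\ge t^2/3$ on $[0,1]$, whence the factor $\tfrac16 p_{\min}^2$) applied to \eqref{F-reduct}. This matches the paper. The genuine gap is in the complementary inequality. You propose to upper-bound the gap by $F(x^k)-F^*\le \tfrac{L_f}{2}\|x^k-x^*\|^2$ and then invoke \eqref{x-td}. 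That step requires $F$ to be differentiable with Lipschitz gradient at $x^*$ (so that the first-order term vanishes), which fails for the nonsmooth $g$ the theorem covers: for $g=\|\cdot\|_1$ the gap can grow \emph{linearly} in $\|x-x^*\|$ near $x^*$, so no quadratic upper bound of this kind exists. You correctly notice that the subgradient information \eqref{v-subgrad-full} lives at $x+d(x)$ rather than at $x$, but the alternative you then reach for reintroduces smoothness of $g$ through the back door. (This is exactly why the \emph{global} linear rate in Subsection~\ref{global-linear} needs Assumption~\ref{assump2}, whereas the present local result does not.)

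The paper's way around this is the inequality \eqref{F-ineq1}: starting from the block optimality conditions, the convexity of $f$ and of $g$ evaluated against $x^*$, and the chain $\|x-x^*\|_H\le 2(L_f/\sigma_f)^{3/2}\tfrac{1-\eta}{c_1\sqrt n}\|d\|_H$ (from \eqref{x-td} and \eqref{td-d-norm}), one shows $f(x)+\langle\nabla f(x)+v,d\rangle+g(x+d)\le F^*+\beta\|d\|_H^2$ with no smoothness of $g$; feeding this into the per-block one-step expansion (with the step-size weights $p_i/(1+\lambda_i)\ge\theta$ and the error terms that produce $(\tfrac12+\eta)p_{\max}$) yields $\bE_\i[F(x^+)]\le(1-\theta)F(x)+\theta F^*+c_2\bigl(\sum_i\lambda_i\bigr)^2$. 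This is then \emph{inverted} to give $\bigl(\sum_i\lambda_i\bigr)^2\ge\frac{1}{c_2}\bigl(\bE_\i[\delta^+]-(1-\theta)\delta\bigr)$ and substituted into the descent inequality, producing an implicit recursion in $\bE_\i[\delta^+]$ whose resolution is exactly the factor $\frac{6c_2+p_{\min}^2(1-\theta)}{6c_2+p_{\min}^2}$ (note the $p_{\min}^2$ in the denominator, which your direct substitution scheme would not produce). Your closing speculation about where $\theta$ and $(\tfrac12+\eta)p_{\max}$ enter is pointing in the right direction, but without the $g(x+d)$-based convexity argument and the inversion step the proof does not close for nonsmooth $g$.
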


\begin{proof}
Let $k \ge 0$ be arbitrarily chosen. For convenience, let $x=x^k$ and $x^+ = x^{k+1}$.
 By the updating scheme of $x^{k+1}$, one can observe that  $x^+_j = x_j$ for $j \neq \i$ and
\[
x^+_{\i} = x_{\i} + \frac{d_{\i}(x)}{1+\lambda_{\i}(x)},
\]
where $\i\in \{1,\ldots,n\}$ is randomly chosen with probability $p_{\i}$ and
$d_{\i}(x)$ is an approximate solution to problem \eqref{subprob} that satisfies
\eqref{v-subgrad} and \eqref{vi} for some $v_{\i}$ and $\eta\in[0,1/4]$.
To prove this theorem, it suffices to show that
\beq \label{F-loc-linear}
\bE_\i[F(x^+)-F^*] \le \left(\frac{6c_2+ p^2_{\min}(1-\theta)}{6c_2+ p^2_{\min}}  \right)(F(x)-F^*).
\eeq
To this end, we first claim that $\theta$ is well-defined in \eqref{pmax}
and moreover $\theta \in (0,1)$. Indeed, given any $i\in\{1,\ldots,n\}$, let $y \in \Re^N$ be defined as follows:
\[
y_i = x_i + \frac{d_i(x)}{1+\lambda_i(x)}, \quad\quad  y_j=x_j, \  \forall j\neq i,
\]
where $\lambda_i(\cdot)$ is defined in \eqref{lambdai}.
By a similar argument as for \eqref{F-descent}, one has
\[
F(y) \le F(x) - \frac12 \omega(\lambda_i(x)).
\]
Using this, $x\in\cS(x^0)$, $F(y) \ge F^*$ and the monotonicity of
$\omega^{-1}$, we obtain that
\[
\lambda_i(x) \le  \omega^{-1}(2[F(x)-F(y)]) \le \omega^{-1}(2[F(x^0)-F^*]),
\]
where $\omega^{-1}$ is the inverse function of $\omega$ when restricted to the
interval $[0,\infty)$.\footnote{
Observe from \eqref{omega} that $\omega$ is strictly increasing in $[0,\infty)$. Thus,
its inverse function $\omega^{-1}$ is well-defined when restricted to this interval and
moreover it is strictly increasing.} It thus follows that $\theta$ is well-defined in
\eqref{pmax} and moreover $\theta \in (0,1)$.

For convenience, let $\lambda_i = \lambda_i(x)$, $d_i=d_i(x)$ and $H_i=\nabla^2_{ii} f(x)$ for $i=1,\ldots,n$ and $H=\Diag(H_1,\ldots, H_n)$. In view of $x\in\cS(x^0)$ and \eqref{L}, one can observe that
\[
\|H\| \le \|\nabla^2 f(x)\| \le L_f,
\]
which along with \eqref{x-td} and \eqref{td-d-norm} implies
\beqa
\|x-x^*\|_H &\le&  \|H\|^{1/2}  \|x-x^*\| \le 2 (L_f^{3/2}/\sigma_f) \|\td(x)\|, \nn \\
&\le &2 \left(\frac{L_f}{\sigma_f}\right)^{3/2} \frac{1-\eta}{c_1\sqrt{n}} \|d\|_H.
\label{x-xs-norm}
\eeqa
It follows from  \eqref{v-subgrad}
that there exists $s_i \in \partial g_i(x_i+d_i)$ such that
\beq \label{opt-subspace}
 \nabla_i f(x) + H_i d_i+ s_i + v_i = 0, \quad\quad i=1,\ldots, n,
\eeq
which together with the definition of $H$ and $v$ yields
\[
\nabla f(x) + Hd + s + v = 0,
\]
where $s=(s_1,\ldots,s_n) \in \partial g(x+d)$.

By the convexity of $f$, one has
\[
f(x) \le f(x^*) + \langle \nabla f(x), x-x^* \rangle.
\]
In addition, by $s\in \partial g(x+d)$ and the convexity of $g$, one has
\[
g(x+d) \le g(x^*) + \langle s, x+d-x^* \rangle.
\]
Using the last three relations, \eqref{v-norm} and \eqref{x-xs-norm},
 we can obtain that
\beqa
f(x) + \langle \nabla f(x) +v, d \rangle + g(x+d) &\le&
 f(x^*) + \langle \nabla f(x), x-x^* \rangle  + \langle \nabla f(x) +v, d \rangle + g(x^*) \nn \\
&& + \langle s, x+d-x^* \rangle  \nn \\
&= & F^* + \langle \nabla f(x)+v+s, x+d-x^* \rangle - \langle v, x-x^* \rangle  \nn \\
&=& F^* + \langle -Hd, x+d-x^* \rangle - \langle v, x-x^* \rangle  \nn \\
&=& F^* - \langle Hd, d \rangle -\langle Hd, x-x^* \rangle- \langle v, x-x^* \rangle  \nn \\
&\le& F^* -\|d\|^2_H + \|d\|_H \cdot \|x-x^*\|_H + \|v\|^*_H \cdot \|x-x^*\|_H \nn \\
&\le & F^* + \beta \|d\|^2_H, \label{F-ineq1}
\eeqa
where
\beq \label{beta}
\beta = \left(\frac{L_f}{\sigma_f}\right)^{3/2} \frac{2(1-\eta^2)}{c_1\sqrt{n}}-1.
\eeq

By \eqref{vi} and \eqref{pmax}, we have
\beq \label{F-ineq2}
- \sum\limits_i  \frac{p_i\langle v_i, d_i \rangle}{1+\lambda_i} \le
\sum\limits_i  \frac{p_i}{1+\lambda_i} \|v_i\|^*_{H_i} \cdot \|d_i\|_{H_i}
\le \eta \sum\limits_i  \frac{p_i}{1+\lambda_i} \|d_i\|^2_{H_i} \le \eta \ p_{\max}  \|d\|_H^2.
\eeq

In addition, recall that $\omega_*(t)=-t-\ln(1-t)$. It thus follows that
\[
\omega_*(t) = \sum\limits_{k=2}^\infty \frac{t^k}{k!} \ \le \ \frac{t^2}{2}  \sum\limits_{k=0}^\infty t^k\  = \ \frac{t^2}{2(1-t)}, \quad\quad  \forall t\in[0,1).
\]
This inequality implies that
\beq \label{F-ineq3}
\sum\limits_i p_i\omega_*\left(\frac{\lambda_i}{1+\lambda_i}\right) \le
\sum\limits_i \frac{p_i (\lambda_i/(1+\lambda_i))^2}{2(1-\lambda_i/(1+\lambda_i))}
= \frac12 \sum\limits_i \frac{p_i \lambda_i^2}{1+\lambda_i} \le \frac{p_{\max}}{2} \sum\limits_i \lambda_i^2 = \frac{p_{\max}}{2} \|d\|_H^2,
\eeq
where $p_{\max}$ is defined in \eqref{pmax}.

Recall that $s_i\in\partial g_i(x_i+d_i)$. By the convexity of $g_i$, one has $ g_i(x_i+d_i) - g_i(x_i) \le  \langle s_i, d_i \rangle$. It thus follows from this and \eqref{opt-subspace}
that for $i=1,\ldots, n$,
\beqa
& & \langle \nabla_i f(x) + v_i, d_i\rangle  + g_i(x_i+d_i) - g_i(x_i) \le
\langle \nabla_i f(x) + v_i, d_i\rangle + \langle s_i, d_i \rangle \nn \\ [6pt]
&& \ = \langle \nabla_i f(x)  + s_i + v_i , d_i \rangle = - \langle d_i, H_i d_i  \rangle \ \le \ 0.  \label{1st-opt}
\eeqa
By a similar argument as for \eqref{f-ineq} and the definition of $x^+$, one has
\[
f(x^+) \le f(x) +  \frac{1}{1+\lambda_\i}
 \langle \nabla_\i f(x), d_\i \rangle +
\omega_*\left(\frac{\lambda_\i}{1+\lambda_\i}\right).
\]
It also follows from the convexity of $g_\i$ that
\[
g_\i\left(x_\i + \frac{d_\i}{1+\lambda_\i}\right) - g_\i(x_\i) \le \frac{1}{1+\lambda_\i}\left[g_\i(x_\i+d_\i) - g_\i(x_\i)\right].
\]
Using the last two inequalities and the definition of $x^+$, we have
\[
\ba{lcl}
F(x^+) &=&
f(x^+) + g_{\i}\left(x_{\i}+\frac{d_{\i}}{1+\lambda_{\i}}\right) + \sum\limits_{j\neq \i} g_j(x_j) \\ [6pt]
&=& f(x^+) + g(x) +  g_{\i}\left(x_{\i}+\frac{d_{\i}}{1+\lambda_{\i}}\right) -g_\i(x_\i) \\ [6pt]
&\le & f(x) +  \frac{1}{1+\lambda_\i}
 \langle \nabla_\i f(x), d_\i \rangle +
\omega_*\left(\frac{\lambda_\i}{1+\lambda_\i}\right) + g(x) +  g_{\i}\left(x_{\i}+\frac{d_{\i}}{1+\lambda_{\i}}\right) -g_\i(x_\i) \\ [6pt]
&=& F(x) +  \frac{1}{1+\lambda_\i}
 \langle \nabla_\i f(x), d_\i \rangle +
\omega_*\left(\frac{\lambda_\i}{1+\lambda_\i}\right) +  g_{\i}\left(x_{\i}+\frac{d_{\i}}{1+\lambda_{\i}}\right) -g_\i(x_\i) \\ [6pt]
&\le&  F(x) +  \frac{1}{1+\lambda_\i}
 \langle \nabla_\i f(x), d_\i \rangle +
\omega_*\left(\frac{\lambda_\i}{1+\lambda_\i}\right)+ \frac{1}{1+\lambda_\i}\left[g_\i(x_\i+d_\i) - g_\i(x_\i)\right] \\ [6pt]
&=& F(x) +  \frac{1}{1+\lambda_\i}
\left[\langle \nabla_\i f(x) +v_\i, d_\i \rangle + g_\i(x_\i+d_\i) - g_\i(x_\i)\right]
-  \frac{\langle v_\i, d_\i \rangle}{1+\lambda_\i} +
\omega_*\left(\frac{\lambda_\i}{1+\lambda_\i}\right).
\ea
\]
Taking expectation with respect to $\i$ on both sides and using \eqref{pmax},
 \eqref{F-ineq1}, \eqref{F-ineq2}, \eqref{F-ineq3} and \eqref{1st-opt}, one has
{\small
\beqa
\bE_\i[F(x^+)] &\le& F(x) + \sum\limits_i \frac{p_i}{1+\lambda_i}
\underbrace{\left[\langle \nabla_i f(x) +v_i, d_i \rangle + g_i(x_i+d_i) - g_i(x_i)\right]}_{\le 0\  \mbox{due  to} \ \eqref{1st-opt}}
 -   \sum\limits_i  \frac{p_i\langle v_i, d_i \rangle}{1+\lambda_i} + \sum\limits_i
p_i \omega_*\left(\frac{\lambda_i}{1+\lambda_i}\right) \nn \\
&\le&  F(x) + \theta \sum\limits_i \left[\langle \nabla_i f(x) +v_i, d_i \rangle + g_i(x_i+d_i) - g_i(x_i)\right]  -   \sum\limits_i  \frac{p_i\langle v_i, d_i \rangle}{1+\lambda_i} + \sum\limits_i
p_i \omega_*\left(\frac{\lambda_i}{1+\lambda_i}\right)  \nn \\
&=&  F(x) +  \theta \left[\langle \nabla f(x) +v, d \rangle + g(x+d) - g(x)\right]   -   \sum\limits_i  \frac{p_i\langle v_i, d_i \rangle}{1+\lambda_i} + \sum\limits_i
p_i \omega_*\left(\frac{\lambda_i}{1+\lambda_i}\right) \nn \\
&=&  (1-\theta)F(x) +  \theta \left[f(x)+\langle \nabla f(x) +v, d \rangle + g(x+d) \right]   -   \sum\limits_i  \frac{p_i\langle v_i, d_i \rangle}{1+\lambda_i} + \sum\limits_i
p_i \omega_*\left(\frac{\lambda_i}{1+\lambda_i}\right)  \nn \\
&\le&  (1-\theta)F(x) +\theta (F^* +\beta \|d\|^2_H) +\eta \ p_{\max}  \|d\|_H^2 +
\frac{p_{\max}}{2} \|d\|_H^2 \nn  \\
&=&  (1-\theta)F(x) +\theta F^* + \left(\theta\beta +
(1/2+\eta)  p_{\max}\right)  \|d\|_H^2 \nn \\
&\le&  (1-\theta)F(x) +\theta F^* +  c_2\left (\sum_i \lambda_i\right)^2, \label{F-ineq4}
\eeqa
}
where the last inequality is due to \eqref{beta}, \eqref{c4} and $\|d\|_H^2=\sum_i \lambda^2_i \le (\sum_i \lambda_i)^2$.

One can easily observe from \eqref{F-ineq4} that the conclusion of this theorem holds if $c_2=0$. We now assume $c_2>0$. Let $\delta^+=F(x^+)-F^*$ and $\delta=F(x)-F^*$. It then follows from \eqref{F-ineq4} that
\[
\bE_\i[\delta^+] \le (1-\theta) \delta + c_2 \left(\sum_i \lambda_i\right)^2,
\]
which yields
\beq \label{lambda-lbd}
\left(\sum_i \lambda_i\right)^2 \ge \frac{1}{c_2} \left(\bE_\i[\delta^+] - (1-\theta) \delta\right)
\eeq
By the assumption, one has $F(x) \le F(x^0) \le F^*+\omega(c_1/p_{\min})$. By this and \eqref{Fgap-lowbd},  we have
\[
\omega(c_1 \sum_i \lambda_i) \le  F(x) - F^* \le \omega(c_1/p_{\min}),
\]
which together with the monotonicity of $\omega$ in $[0,\infty)$ implies
 $p_{\min}\sum_i \lambda_i \le 1 $. Observe that
\[
\omega(t) = t-\ln(1+t) = \sum^\infty_{k=2}\frac{(-1)^k t^k}{k!} \ge \frac{t^2}{2}-\frac{t^3}{6} \ge \frac{t^2}{3}, \quad\quad \forall t\in [0,1].
\]
This and $p_{\min}\sum_i \lambda_i \le 1$ lead to
\[
\omega\left(p_{\min}\sum_i \lambda_i\right) \ge \frac13 p^2_{\min}\left(\sum_i \lambda_i\right)^2.
\]
It then follows from this and \eqref{F-reduct} that
\[
\bE_\i[\delta^+] \le \delta - \frac16 p^2_{\min}\left(\sum_i \lambda_i\right)^2,
\]
which together with \eqref{lambda-lbd} gives
\[
\bE_\i[\delta^+] \le \delta - \frac{p^2_{\min}}{6c_2} \left(\bE_\i[\delta^+] - (1-\theta) \delta\right).
\]
Hence, we obtain that
\[
\bE_\i[\delta^+] \le \left(\frac{6c_2+ p^2_{\min}(1-\theta)}{6c_2+ p^2_{\min}}  \right)\delta,
\]
which proves \eqref{F-loc-linear} as desired.
\end{proof}

\subsection{Global linear convergence}
\label{global-linear}

In this subsection we show that for a class of $g$ including the case where $g$ is smooth
(but not necessarily self-concordant) and $\nabla g$ is Lipschitz continuous in
$\cS(x^0)$,\footnote{This covers the case where  $g = 0$, which, for instance, arises in the interior point methods for solving smooth convex optimization problems.} RBPDN enjoys a global linear convergence. To this end,  we make the
following assumption throughout this subsection which, as shown subsequently, holds
for a class of $g$.

\begin{assumption} \label{assump2}
There exists some $c_3>0$ such that
\[
\|\td(x)\| \ge c_3 \blambda(x), \quad\quad \forall x\in\cS(x^0),
\]
where $\cS(x^0)$, $\blambda(x)$ and $\td(x)$ are defined in \eqref{S0}, \eqref{blambda} and \eqref{td}, respectively.
\end{assumption}

The following proposition shows that  Assumption \ref{assump2} holds for a class of $g$
including $g=0$ as a special case.

\begin{proposition} \label{suff-cond-assump2}
Suppose that $g$ is Lipschitz differentiable in $\cS(x^0)$ with a Lipschitz constant
$L_g \ge 0$. Then Assumption \ref{assump2} holds with $c_3=\sqrt{\sigma_f}/(L_f+L_g)$, where
 $\sigma_f$ and $L_f$ are defined in \eqref{sigma-f} and \eqref{L}, respectively.
\end{proposition}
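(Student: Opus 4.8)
The plan is to exploit the first-order optimality condition defining the exact proximal Newton direction $\td(x)$ together with the differentiability of $g$ in order to bound $\blambda(x)$ by a multiple of $\|\td(x)\|$.

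First I would record that, since $f$ is differentiable on $\dom(f)$ and $g$ is differentiable on $\cS(x^0)$, the subdifferentials in question are singletons: $\partial F(x)=\{\nabla f(x)+\nabla g(x)\}$, so $\blambda(x)=\|\nabla f(x)+\nabla g(x)\|^*_x$ (no minimization is actually needed), and the optimality condition for \eqref{td} reads
\[
\nabla f(x) + \nabla^2 f(x)\,\td(x) + \nabla g\bigl(x+\td(x)\bigr) = 0 .
\]
Setting $s:=\nabla f(x)+\nabla g(x)$ and subtracting this identity gives $s=\bigl(\nabla g(x)-\nabla g(x+\td(x))\bigr)-\nabla^2 f(x)\,\td(x)$.

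Next I would estimate $\|s\|$ in the Euclidean norm: Lipschitz continuity of $\nabla g$ with constant $L_g$ yields $\|\nabla g(x)-\nabla g(x+\td(x))\|\le L_g\|\td(x)\|$, while $\|\nabla^2 f(x)\,\td(x)\|\le L_f\|\td(x)\|$ by the definition \eqref{L} of $L_f$, so $\|s\|\le (L_f+L_g)\|\td(x)\|$. Then I would pass from the Euclidean norm to the dual local norm exactly as in \eqref{td-2norms}: since $\lambda_{\min}(\nabla^2 f(x))\ge\sigma_f$ for $x\in\cS(x^0)$ by \eqref{sigma-f}, one has $\blambda(x)=\|s\|^*_x\le \sigma_f^{-1/2}\|s\|\le \sigma_f^{-1/2}(L_f+L_g)\|\td(x)\|$. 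Rearranging gives $\|\td(x)\|\ge \frac{\sqrt{\sigma_f}}{L_f+L_g}\,\blambda(x)$, which is Assumption \ref{assump2} with the stated $c_3$; note $c_3$ is finite and positive because $L_f>0$ by Assumption \ref{assump}(ii) and compactness of $\cS(x^0)$.

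The one point requiring care is the domain bookkeeping: applying the Lipschitz bound on $\nabla g$ to the pair $x$ and $x+\td(x)$ presupposes that $x+\td(x)$ (and the segment joining it to $x$) lies in the region where $g$ is differentiable with that constant. I would handle this either by reading the smoothness hypothesis as holding on all of $\dom(g)$ — which under \eqref{gx} is natural, since each $g_i$ being smooth forces $\dom(g_i)=\Re^{N_i}$ — or by a preliminary argument that $x+\td(x)$ remains in a controlled neighbourhood of $\cS(x^0)$. Everything else is a short computation, so I expect this domain issue to be the only obstacle.
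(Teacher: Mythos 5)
Your argument is essentially identical to the paper's proof: both write the optimality condition $\nabla f(x)+\nabla^2 f(x)\td(x)+\nabla g(x+\td(x))=0$, identify $\blambda(x)=\|\nabla f(x)+\nabla g(x)\|^*_x$, bound it by $\sigma_f^{-1/2}\|\nabla g(x)-\nabla g(x+\td(x))-\nabla^2 f(x)\td(x)\|$, and apply the Lipschitz constants $L_g$ and $L_f$. The domain caveat you raise (whether the Lipschitz bound on $\nabla g$ applies to the pair $x$ and $x+\td(x)$) is a legitimate subtlety that the paper's proof also passes over silently, so it does not distinguish your route from theirs.
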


\begin{proof}
Let $x \in \cS(x^0)$ be arbitrarily chosen. It follows from \eqref{td} and the differentiability of $g$ that
\[
\nabla f(x) +\nabla^2 f(x) \td(x) + \nabla g(x+\td(x)) = 0,
\]
which, together with  \eqref{blambda}, \eqref{L} and the Lipschitz continuity of
$\nabla g$,  implies that
\[
\ba{lcl}
\blambda(x)  &=& \|\nabla f(x) + \nabla g(x)\|^*_x  \le \frac{1}{\sqrt{\sigma_f}} \|\nabla f(x) + \nabla g(x)\|, \\ [8pt]
&=&
  \frac{1}{\sqrt{\sigma_f}} \|\nabla g(x) - \nabla g(x+\td(x))  -  \nabla^2 f(x) \td(x)\| \le  \frac{L_f + L_g}{\sqrt{\sigma_f}} \|\td(x)\|.
\ea
\]
and hence the conclusion holds.
\end{proof}

\vgap

We next provide a lower bound for $\blambda(x)$ in terms of the optimality gap, which
will play crucial role in our subsequent analysis.

\begin{lemma}
Let $x\in\dom(F)$ and $\blambda(x)$ be defined in \eqref{blambda}.  Then
\beq \label{blambda-lowbdd}
\blambda(x) \ge \omega_*^{-1}(F(x)-F^*),
\eeq
where $\omega_*^{-1}$ is the inverse function of $\omega_*$ when restricted to the interval $[0,1)$.
\end{lemma}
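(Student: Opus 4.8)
The plan is to obtain \eqref{blambda-lowbdd} directly from the upper bound on the optimality gap already established in Lemma~\ref{F-gap} (the second inequality of \eqref{opt-gap}), combined with elementary monotonicity properties of $\omega_*$.

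First I would record the relevant facts about $\omega_*$. From \eqref{omega} we have $\omega_*(0)=0$, $\omega_*$ is continuous and strictly increasing on $[0,1)$, and $\omega_*(t)\to\infty$ as $t\to 1^-$; hence $\omega_*$ is a strictly increasing bijection from $[0,1)$ onto $[0,\infty)$, and consequently its inverse $\omega_*^{-1}$ is well-defined and strictly increasing on all of $[0,\infty)$ with values in $[0,1)$. Since $x^*$ is optimal for \eqref{self-concord}, $F(x)-F^*\ge 0$, so $\omega_*^{-1}(F(x)-F^*)$ is well-defined and lies in $[0,1)$.

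Then I would split into two cases according to the size of $\blambda(x)$. If $\blambda(x)\ge 1$, then \eqref{blambda-lowbdd} is immediate, since its right-hand side is strictly less than $1$. If instead $\blambda(x)<1$, then the second inequality of \eqref{opt-gap} is applicable and gives $F(x)-F^*\le\omega_*(\blambda(x))$; applying the increasing function $\omega_*^{-1}$ to both sides (which is legitimate because both quantities lie in the range $[0,\infty)$ of $\omega_*$) yields $\omega_*^{-1}(F(x)-F^*)\le\omega_*^{-1}(\omega_*(\blambda(x)))=\blambda(x)$, i.e.\ \eqref{blambda-lowbdd}.

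There is essentially no hard step here; the only point requiring a little care is to observe that $\omega_*$ maps $[0,1)$ onto all of $[0,\infty)$, so that $\omega_*^{-1}$ may be applied to $F(x)-F^*$ regardless of how large the optimality gap is, and to dispose of the degenerate case $\blambda(x)\ge 1$ separately, since there the bound supplied by Lemma~\ref{F-gap} is not available.
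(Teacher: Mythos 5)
Your proposal is correct and follows essentially the same route as the paper: establish that $\omega_*$ is a strictly increasing bijection from $[0,1)$ onto $[0,\infty)$ so that $\omega_*^{-1}$ is well-defined and increasing with values in $[0,1)$, then split into the cases $\blambda(x)\ge 1$ (where the bound is trivial) and $\blambda(x)<1$ (where the second inequality of \eqref{opt-gap} applies). No differences of substance.
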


\begin{proof}
Observe from \eqref{omega} that $\omega_*(t)\in [0, \infty)$ for $t\in[0,1)$ and
$\omega_*$ is strictly increasing in $[0,1)$. Thus its inverse function $\omega_*^{-1}$
is well-defined when restricted to this interval. It also follows that $\omega_*^{-1}(t) \in [0,1)$ for $t\in[0,\infty)$ and $\omega_*^{-1}$ is strictly increasing in $[0,\infty)$.
We divide the rest of the proof into two separable cases as follows.

Case 1): $\blambda(x)<1$. It follows from Theorem \ref{F-gap} that
$F(x)-F^* \le \omega_*(\blambda(x))$. Taking $\omega_*^{-1}$ on both sides of
this relation and using the monotonicity of $\omega_*^{-1}$, we see that \eqref{blambda-lowbdd} holds.

Case 2): $\blambda(x) \ge 1$. \eqref{blambda-lowbdd} clearly holds in this case
due to $\omega_*^{-1}(t) \in [0,1)$ for all $t \ge 0$
\end{proof}

\gap

In what follows, we show that under Assumption \ref{assump2} RBPDN enjoys a
global linear convergence.

\begin{theorem} \label{thm:global-linear-converg}
Let $\{x^k\}$ be generated by RBPDN. Suppose that Assumption \ref{assump2} holds. Then
\[
\bE[F(x^k)-F^*] \le  \left[1-\frac{c^2_4 p^2_{\min}(1- \omega_*^{-1}(\delta_0))} {2(1+c_4 p_{\min} \omega_*^{-1}(\delta_0))}\right]^k(F(x^0)-F^*), \quad\quad \forall k \ge0,
\]
where $\delta_0 = F(x^0)-F^*$,
\beq \label{c3}
c_4 =  \frac{c_1c_3\sqrt{n\sigma_f}}{1-\eta},
\eeq
and $\sigma_f$ and $c_1$ are defined in \eqref{sigma-f} and \eqref{c1}, respectively.
\end{theorem}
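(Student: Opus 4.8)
The plan is to prove a one-step contraction of the expected optimality gap and then iterate. Write $\delta_k := F(x^k) - F^*$. I would aim to establish, for every $k \ge 0$,
\[
\bE_\i[\delta_{k+1}] \ \le\ (1-\rho)\,\delta_k, \qquad \rho := \frac{c^2_4 p^2_{\min}\left(1- \omega_*^{-1}(\delta_0)\right)}{2\left(1+c_4 p_{\min}\,\omega_*^{-1}(\delta_0)\right)},
\]
and then obtain the theorem by taking total expectations and a routine induction on $k$. The engine is the per-iteration reduction \eqref{F-reduct}, $\bE_\i[\delta_{k+1}] \le \delta_k - \frac12\omega(p_{\min}\sum_i\lambda_i(x^k))$, so everything reduces to bounding $\omega(p_{\min}\sum_i\lambda_i(x^k))$ below by a fixed multiple of $\delta_k$.

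First I would lower-bound $\sum_i\lambda_i(x^k)$ in terms of $\delta_k$. By \eqref{F-descent} the values $F(x^k)$ are nonincreasing along every sample path, so $x^k \in \cS(x^0)$ and $\delta_k \le \delta_0$ for all $k$. On $\cS(x^0)$ I can chain three facts already established: \eqref{td-norm} gives $\|\td(x^k)\| \le \frac{1-\eta}{c_1\sqrt{n\sigma_f}}\sum_i\lambda_i(x^k)$; Assumption~\ref{assump2} gives $\|\td(x^k)\| \ge c_3\blambda(x^k)$; and \eqref{blambda-lowbdd} gives $\blambda(x^k) \ge \omega_*^{-1}(\delta_k)$. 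Together with the definition \eqref{c3} of $c_4 = c_1 c_3\sqrt{n\sigma_f}/(1-\eta)$, these yield
\[
\sum_{i=1}^n\lambda_i(x^k) \ \ge\ c_4\,\omega_*^{-1}(\delta_k).
\]
Writing $\tau_k := \omega_*^{-1}(\delta_k) \in [0,1)$ (so $\delta_k = \omega_*(\tau_k)$) and $\tau_0 := \omega_*^{-1}(\delta_0)$, monotonicity of $\omega$ on $[0,\infty)$ then gives $\omega(p_{\min}\sum_i\lambda_i(x^k)) \ge \omega(c_4 p_{\min}\tau_k)$, with $\tau_k \le \tau_0 < 1$.

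The remaining ingredient is a purely scalar estimate: with $a := c_4 p_{\min}$,
\[
\omega(a\tau) \ \ge\ \frac{a^2(1-\tau_0)}{1+a\tau_0}\,\omega_*(\tau) \qquad \text{for all } \tau \in [0,\tau_0].
\]
To prove it I would sandwich the two functions: $\omega(s) \ge \frac{s^2}{2(1+s)}$ for $s \ge 0$, which follows from concavity of $\omega'$ (so $\omega(s) = \int_0^s\omega'(t)\,dt \ge \frac{s}{2}\omega'(s)$), and $\omega_*(\tau) \le \frac{\tau^2}{2(1-\tau)}$ for $\tau \in [0,1)$, which was already derived in the proof of the local linear convergence theorem in Subsection~\ref{local-linear}. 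These give $\omega(a\tau)/\omega_*(\tau) \ge a^2(1-\tau)/(1+a\tau)$, and since $\tau \mapsto (1-\tau)/(1+a\tau)$ is decreasing on $[0,\infty)$ the ratio is at least its value at $\tau_0$. Applying this at $\tau = \tau_k$ and using $\delta_k = \omega_*(\tau_k)$ gives $\omega(c_4 p_{\min}\tau_k) \ge \frac{c_4^2 p_{\min}^2(1-\tau_0)}{1+c_4 p_{\min}\tau_0}\,\delta_k$.

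Substituting back into \eqref{F-reduct} produces the one-step contraction $\bE_\i[\delta_{k+1}] \le (1-\rho)\delta_k$ with the stated $\rho$, and iterating over $k$ finishes the argument. The step I expect to need the most care is the scalar inequality: one has to choose bounds on $\omega$ and $\omega_*$ sharp enough to reproduce exactly the contraction factor in the statement, and to exploit the monotonicity in $\tau$ so that the rate is governed by the fixed initial gap $\delta_0$ rather than by the current random gap $\delta_k$. Everything else is assembly of the lemmas in Sections~\ref{tech} and~\ref{global-linear}, together with the standard passage from a conditional-expectation recursion to a bound on $\bE[\delta_k]$.
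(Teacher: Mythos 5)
Your proof is correct and follows the same skeleton as the paper's: both chain \eqref{td-norm}, Assumption~\ref{assump2} and \eqref{blambda-lowbdd} to get $\sum_i\lambda_i(x^k)\ge c_4\,\omega_*^{-1}(\delta_k)$, feed this into \eqref{F-reduct}, and then reduce everything to a scalar inequality of the form $\omega(a\,\omega_*^{-1}(\delta))\ge \frac{a^2(1-\omega_*^{-1}(\delta_0))}{1+a\,\omega_*^{-1}(\delta_0)}\,\delta$ with $a=c_4p_{\min}$. The one place you genuinely diverge is how that scalar inequality is proved. The paper computes $\frac{d}{ds}\bigl[\omega(a\,\omega_*^{-1}(s))\bigr]=\frac{a^2(1-\omega_*^{-1}(s))}{1+a\,\omega_*^{-1}(s)}$, bounds the integrand below by its value at $s=\delta_0$, and integrates from $0$ to $\delta$; you instead sandwich with the elementary bounds $\omega(s)\ge \frac{s^2}{2(1+s)}$ (valid for $s\ge 0$, e.g.\ by concavity of $\omega'$) and $\omega_*(\tau)\le\frac{\tau^2}{2(1-\tau)}$ (already derived in the local linear convergence proof), and then use that $\tau\mapsto(1-\tau)/(1+a\tau)$ is decreasing. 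Both routes produce exactly the same constant, so nothing is lost; your version is more elementary and avoids the implicit-function differentiation, while the paper's integral identity is the form it reuses verbatim later in the section to sharpen the complexity bounds for DN, IDN and the proximal Newton method. All remaining steps (monotonicity of $F(x^k)$ along sample paths so that $x^k\in\cS(x^0)$ and $\tau_k\le\tau_0$, and the passage from the conditional one-step contraction to the bound on $\bE[F(x^k)-F^*]$ by induction) are handled the same way in both arguments.
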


\begin{proof}
Let $k \ge 0$ be arbitrarily chosen. For convenience, let $x=x^k$ and $x^+ = x^{k+1}$.
 By the updating scheme of $x^{k+1}$, one can observe that  $x^+_j = x_j$ for $j \neq \i$ and
\[
x^+_{\i} = x_{\i} + \frac{d_{\i}(x)}{1+\lambda_{\i}(x)},
\]
where $\i\in \{1,\ldots,n\}$ is randomly chosen with probability $p_{\i}$ and
$d_{\i}(x)$ is an approximate solution to problem \eqref{subprob} that satisfies
\eqref{v-subgrad} and \eqref{vi} for some $v_{\i}$ and $\eta\in[0,1/4]$.
To prove this theorem, it suffices to show that
\beq \label{F-linear}
\bE_{\i}[F(x^+)-F^*] \le  \left[1-\frac{c^2_4 p^2_{\min}(1- \omega_*^{-1}(\delta_0))} {2(1+c_4 p_{\min} \omega_*^{-1}(\delta_0))}\right](F(x)-F^*).
\eeq

Indeed, it follows from \eqref{td-norm}, \eqref{c3}
and Assumption \ref{assump2} that
\[
 \sum^n\limits_{i=1} \lambda_i(x) \ \ge \  \frac{c_1\sqrt{n\sigma_f}}{1-\eta} \|\td(x)\|
\ \ge \  c_4 \blambda(x).
\]
This together with \eqref{blambda-lowbdd} yields
\[
 \sum^n\limits_{i=1} \lambda_i(x) \ \ge \ c_4 \omega_*^{-1}(F(x)-F^*).
\]
Using this, \eqref{F-reduct} and the monotonicity of $\omega$ in $[0,\infty)$, we obtain that
\[
\bE_{\i}[F(x^+)] \le F(x) - \frac12 \omega\left(c_4 p_{\min} \omega_*^{-1}(F(x)-F^*)\right).
\]
Let $\delta^+=F(x^+)-F^*$ and $\delta=F(x)-F^*$. It then follows that
\beq \label{expect-reduct}
\bE_{\i}[\delta^+] \le \delta - \frac12 \omega\left(c_4 p_{\min} \omega_*^{-1}(\delta)\right).
\eeq

Consider the function $t= \omega_*^{-1}(s)$. Then $s=\omega_*(t)$. Differentiating both sides with
respect to $s$, we have
\[
(\omega_*(t))' \frac{dt}{ds} = 1,
\]
which along with $\omega_*(t)=-t-\ln(1-t)$ yields
\[
(\omega_*^{-1}(s))' = \frac{dt}{ds} = \frac{1}{(\omega_*(t))'} = \frac{1-t}{t} =
\frac{1- \omega_*^{-1}(s)}{ \omega_*^{-1}(s)}.
\]
In view of this and $\omega(t)=t-\ln(1+t)$, one has that for any $\alpha>0$,
\beq \label{deriv-omega}
\frac{d}{ds} [\omega(\alpha\omega_*^{-1}(s))] = \alpha \omega'(\alpha \omega_*^{-1}(s))(\omega_*^{-1}(s))' =
\alpha \cdot \frac{\alpha \omega_*^{-1}(s)}{1+\alpha \omega_*^{-1}(s)} \cdot
\frac{1- \omega_*^{-1}(s)}{ \omega_*^{-1}(s)} = \frac{\alpha^2(1- \omega_*^{-1}(s))}{1+\alpha \omega_*^{-1}(s)}.
\eeq
Notice that $\delta \le \delta_0$ due to $x\in\cS(x^0)$.  By this and the monotonicity of $\omega_*^{-1}$, one can see that
\[
\omega_*^{-1}(s) \le \omega_*^{-1}(\delta) \le \omega_*^{-1}(\delta_0), \quad\quad
\forall s \in [0,\delta],
\]
which implies that
\[
 \frac{1- \omega_*^{-1}(s)}{1+\alpha \omega_*^{-1}(s)} \ge  \frac{1- \omega_*^{-1}(\delta_0)}{1+\alpha \omega_*^{-1}(\delta_0)}, \quad\quad \forall s \in [0,\delta].
\]
Also, observe that $\omega(\alpha\omega_*^{-1}(0))=0$. Using these relations and \eqref{deriv-omega}, we have
 \[
\omega(\alpha\omega_*^{-1}(\delta)) = \int^\delta_0 \frac{d}{ds} [\omega(\alpha\omega_*^{-1}(s))] ds = \int^\delta_0  \frac{\alpha^2(1- \omega_*^{-1}(s))}{1+\alpha \omega_*^{-1}(s)} ds
\ge    \frac{\alpha^2(1- \omega_*^{-1}(\delta_0))}{1+\alpha \omega_*^{-1}(\delta_0)}\delta.
\]
This and \eqref{expect-reduct} with $\alpha=c_4p_{\min}$ lead to
\[
\bE_{\i}[\delta^+] \le  \left[1-\frac{c^2_4 p^2_{\min}(1- \omega_*^{-1}(\delta_0))} {2(1+c_4 p_{\min} \omega_*^{-1}(\delta_0))}\right]\delta,
\]
which gives \eqref{F-linear} as desired.
 \end{proof}

\gap

The following result is an immediate consequence of Proposition \ref{suff-cond-assump2}
and Theorem \ref{thm:global-linear-converg}.

\begin{corollary} \label{cor:global-linear}
Let $\{x^k\}$ be generated by RBPDN. Suppose that $g$ is Lipschitz
differentiable in $\cS(x^0)$ with a Lipschitz constant $L_g \ge 0$. Then
\[
\bE[F(x^k)-F^*] \le  \left[1-\frac{\tc^2_4 p^2_{\min}(1- \omega_*^{-1}(\delta_0))} {2(1+\tc_4 p_{\min} \omega_*^{-1}(\delta_0))}\right]^k(F(x^0)-F^*), \quad\quad \forall k \ge0,
\]
where $\delta_0 = F(x^0)-F^*$,
\[
\tc_4 =  \frac{ \sqrt{n}c_1\sigma_f}{(1-\eta)(L_f+L_g)},
\]
and $\sigma_f$, $L_f$ and  $c_1$ are defined in \eqref{sigma-f}, \eqref{L} and \eqref{c1}, respectively.
\end{corollary}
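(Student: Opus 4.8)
The plan is to obtain this corollary by simply feeding the hypothesis into the two results it cites, so the only real content is a substitution of constants. First I would invoke Proposition \ref{suff-cond-assump2}: under the stated hypothesis that $g$ is Lipschitz differentiable on $\cS(x^0)$ with constant $L_g\ge 0$, that proposition asserts that Assumption \ref{assump2} holds with the explicit choice $c_3=\sqrt{\sigma_f}/(L_f+L_g)$. This is precisely the assumption required to apply Theorem \ref{thm:global-linear-converg}. (Here $\sigma_f$ and $L_f$ are well-defined and positive because $\cS(x^0)$ is compact and $\nabla^2 f$ is continuous and positive definite on it, as established in the earlier lemma, so there is no circularity in the definition of $c_3$.)

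Next I would apply Theorem \ref{thm:global-linear-converg} with this value of $c_3$. That theorem yields
\[
\bE[F(x^k)-F^*] \le \left[1-\frac{c_4^2 p_{\min}^2(1-\omega_*^{-1}(\delta_0))}{2(1+c_4 p_{\min}\omega_*^{-1}(\delta_0))}\right]^k (F(x^0)-F^*),
\]
with $c_4=c_1 c_3\sqrt{n\sigma_f}/(1-\eta)$ as in \eqref{c3}. Substituting $c_3=\sqrt{\sigma_f}/(L_f+L_g)$ gives
\[
c_4=\frac{c_1\sqrt{n\sigma_f}}{1-\eta}\cdot\frac{\sqrt{\sigma_f}}{L_f+L_g}=\frac{\sqrt{n}\,c_1\sigma_f}{(1-\eta)(L_f+L_g)}=\tc_4,
\]
which is exactly the constant $\tc_4$ in the statement. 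Plugging this identification back into the bound from the theorem produces the claimed inequality, completing the argument.

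Since this is a composition of two already-proved results with a one-line algebraic simplification of the rate constant, there is essentially no obstacle; the only thing meriting attention is making sure the Lipschitz-differentiability hypothesis is strong enough to supply Assumption \ref{assump2}, which is handled verbatim by Proposition \ref{suff-cond-assump2}. I would therefore present the proof as: \emph{this is immediate from Proposition \ref{suff-cond-assump2} and Theorem \ref{thm:global-linear-converg} upon noting that the $c_4$ of \eqref{c3} equals $\tc_4$ when $c_3=\sqrt{\sigma_f}/(L_f+L_g)$.}
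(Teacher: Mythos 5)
Your proposal is correct and matches the paper exactly: the paper itself presents this corollary as an immediate consequence of Proposition \ref{suff-cond-assump2} and Theorem \ref{thm:global-linear-converg}, and your substitution $c_3=\sqrt{\sigma_f}/(L_f+L_g)$ into $c_4=c_1c_3\sqrt{n\sigma_f}/(1-\eta)$ correctly yields $\tc_4$.
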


One can observe that RBPDN reduces to PDN \cite{TrKyCe15} or DN \cite{Ne04}  \footnote{PDN becomes DN if $g=0$.} by setting $n=1$. It thus follows from Corollary \ref{cor:global-linear} that PDN for a class of $g$ and DN are globally linearly convergent, which is stated below. To the best of our knowledge, this result was previously unknown in the literature.

\begin{corollary} \label{cor:global-linear-1}
Suppose that $g$ is Lipschitz differentiable in $\cS(x^0)$. Then PDN \cite{TrKyCe15} for such $g$ and DN \cite{Ne04} are globally linearly convergent.
\end{corollary}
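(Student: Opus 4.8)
The plan is to derive Corollary \ref{cor:global-linear-1} directly from Corollary \ref{cor:global-linear} by specializing $n=1$ and reinterpreting the resulting expectation bound as a deterministic one. First I would observe that the RBPDN method with $n=1$ coincides with a single-block scheme: there is only one partition component, $p_1=1$, and the update at each iteration is $x^{k+1} = x^k + d(x^k)/(1+\lambda(x^k))$, where $d(x^k)$ solves (exactly, if $\eta=0$, or inexactly in the sense of \eqref{d-cond1}--\eqref{d-cond2}) the proximal Newton subproblem \eqref{prox-dir}. This is precisely PDN of \cite{TrKyCe15} for general $g$, and when $g=0$ it further reduces to DN of \cite{Ne04} since the subproblem \eqref{prox-dir} then has the closed-form solution \eqref{dk}. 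I would state this identification explicitly, noting that the hypotheses of Corollary \ref{cor:global-linear} are met: $g$ is Lipschitz differentiable on $\cS(x^0)$ by assumption (and $g=0$ is trivially so), and Assumption \ref{assump} is the standing assumption of the paper.

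Next I would apply Corollary \ref{cor:global-linear} with $n=1$, $p_1 = p_{\min} = 1$. The conclusion gives
\[
\bE[F(x^k)-F^*] \le \left[1-\frac{\tc_4^2(1-\omega_*^{-1}(\delta_0))}{2(1+\tc_4\,\omega_*^{-1}(\delta_0))}\right]^k (F(x^0)-F^*),
\]
with $\tc_4 = c_1\sigma_f/((1-\eta)(L_f+L_g))$ and $c_1$ as in \eqref{c1} specialized to $n=1$. The key point I would emphasize is that with $n=1$ the algorithm is \emph{deterministic}: step 1) of RBPDN picks $\i=1$ with probability $1$, so there is no randomness and $\bE[F(x^k)-F^*] = F(x^k)-F^*$. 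Hence the bound reads $F(x^k)-F^* \le q^k (F(x^0)-F^*)$ for the explicit $q\in(0,1)$ above, which is exactly global linear convergence for PDN (and DN).

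The only genuine thing to check — and the step I expect to require the most care — is that the contraction factor $q$ is strictly less than $1$, i.e.\ that the bracketed quantity is strictly positive. This amounts to verifying $\tc_4^2(1-\omega_*^{-1}(\delta_0)) > 0$ and that the denominator $1+\tc_4\,\omega_*^{-1}(\delta_0)$ is finite and positive. Since $\delta_0 = F(x^0)-F^* \in [0,\infty)$, the remarks preceding \eqref{blambda-lowbdd} give $\omega_*^{-1}(\delta_0)\in[0,1)$, so $1-\omega_*^{-1}(\delta_0)>0$ and the denominator is positive; and $\tc_4>0$ because $c_1>0$ (by positivity of $\sigma_f$, established in the proof of Lemma following \eqref{td-norm}, together with finiteness of the supremum in \eqref{c1} over the compact set $\cS(x^0)$), $\sigma_f>0$, $1-\eta>0$ since $\eta\in[0,1/4]$, and $L_f+L_g<\infty$. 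This confirms $q\in(0,1)$. I would close by remarking that this linear rate for DN and for PDN with Lipschitz-differentiable $g$ was not previously recorded in the literature, and that combining it with the two-phase analysis sketched in the introduction sharpens the first-phase iteration count of DN, IDN, PDN, the proximal Newton method, and the mixture of DN and Newton from $O(F(x^0)-F^*)$ to $O(\log(F(x^0)-F^*))$.
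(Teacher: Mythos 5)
Your proposal is correct and follows exactly the paper's route: the paper states Corollary \ref{cor:global-linear-1} as an immediate consequence of Corollary \ref{cor:global-linear}, obtained by observing that RBPDN with $n=1$ (hence $p_1=p_{\min}=1$ and a deterministic iteration) is precisely PDN, and DN when $g=0$. You simply make explicit the details the paper leaves implicit (dropping the expectation, checking $\tc_4>0$ and $\omega_*^{-1}(\delta_0)\in[0,1)$ so the contraction factor is strictly less than $1$), all of which are sound.
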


Before ending this subsection we show that Corollary \ref{cor:global-linear-1} can be
used to sharpen the existing iteration complexity of some methods in
\cite{Ne04,ZhXi15-1,TrKyCe15}.

A mixture of DN and Newton methods is presented in \cite[Section 4.1.5]{Ne04} for solving problem \eqref{self-concord} with $g=0$.  In particular, this method consists of two stages. Given an initial point $x^0$, $\beta \in (0,(3-\sqrt{5})/2)$ and $\epsilon>0$,
 the first stage performs the DN iterations
\beq \label{PDN-iter}
x^{k+1} = x^k - \frac{\td(x^k)}{1+\tlambda(x^k)}
\eeq
until finding some $x^{K_1}$ such that  $\tlambda(x^{K_1}) \le \beta$,
where $\td(\cdot)$ and $\tlambda(\cdot)$ are defined in \eqref{td} and
\eqref{tlambda}, respectively.  The second stage executes the standard Newton iterations
\beq \label{NT}
x^{k+1} = x^k - \td(x^k),
\eeq
starting at $x^{K_1}$ and  terminating at some $x^{K_2}$ such that
$\tlambda(x^{K_2}) \le \epsilon$. As shown in \cite[Section 4.1.5]{Ne04},
the second stage converges quadratically:
\beq \label{local-q}
\tlambda(x^{k+1}) \le \left(\frac{\tlambda(x^k)}{1-\tlambda(x^k)}\right)^2, \quad\quad\forall k \ge K_1.
\eeq
In addition, an upper bound on $K_1$ is established in \cite[Section 4.1.5]{Ne04},
which is
\beq \label{bound-K1}
K_1 \le \left\lceil(F(x^0)-F^*)/\omega(\beta)\right\rceil.
\eeq
In view of \eqref{local-q}, one can easily show that
\beq \label{K2-K1}
K_2 - K_1 \le \left\lceil \log_2\left(\frac{\log \epsilon -2\log(1-\beta)}{\log\beta -2\log(1-\beta)}\right) \right\rceil.
\eeq
Observe that the first stage of this method is just DN, which is a special case of RBPDN
with $n=1$ and $\eta=0$. It thus follows from Corollary \ref{cor:global-linear-1} that
 the first stage converges linearly. In fact, it can be shown that
\beq \label{1st-stage-gap}
F(x^{k+1}) - F^* \le \left(1-\frac{1-\omega_*^{-1}(\delta_0)}{1+\omega_*^{-1}(\delta_0)}\right) (F(x^k) - F^*), \quad\quad \forall k \le K_1,
\eeq
where $\delta_0=F(x^0)-F^*$. Indeed,  since $g=0$, one can observe from
\eqref{blambda} and \eqref{tlambda} that $\tlambda(x^k) = \blambda(x^k)$. It then
follows from this, $g=0$ and \cite[Theorem 4.1.12]{Ne04} that $F(x^{k+1}) \le F(x^k) - \omega(\blambda(x^k))$ for all $k \le K_1$. This together with \eqref{blambda-lowbdd} implies that
\[
F(x^{k+1}) \le F(x^k) - \omega(\omega_*^{-1}(F(x^k)-F^*)), \quad\quad\forall k \le K_1.
\]
The relation \eqref{1st-stage-gap} then follows from this and a similar argument as
in the proof of Theorem \ref{thm:global-linear-converg}. Let
\[
\bK =  \left\lceil \left[\frac{\log(\omega(\beta))-\log\delta_0}{\log\left(1-\frac{1-\omega_*^{-1}(\delta_0)}{1+\omega_*^{-1}(\delta_0)}\right)}\right]_+\right\rceil,
\]
where $t_+ = \max(t,0)$. In view of \eqref{1st-stage-gap}, one can easily verify that $F(x^{\bK})-F^* \le \omega(\beta)$,
which  along with \eqref{low-bdd1} implies that  $\tlambda(x^{\bK}) \le \beta$.
By \eqref{bound-K1} and the definition of $K_1$, one can have
$K_1 \le \min\left\{\bK, \left\lceil \delta_0/\omega(\beta)\right\rceil \right\}$,
which sharpens the bound \eqref{bound-K1}. Combining this relation and \eqref{K2-K1},
we thus obtain the following new iteration complexity for finding an approximate solution
of \eqref{self-concord} with $g=0$ by a mixture of DN and Newton method \cite[Section 4.1.5]{Ne04}.

\begin{theorem}
Let $x^0 \in \dom(F)$, $\beta \in (0,(3-\sqrt{5})/2)$ and $\epsilon>0$ be given.
Then the mixture of DN and Newton methods \cite[Section 4.1.5]{Ne04} for solving problem \eqref{self-concord} with $g=0$ requires at most
\[
\min\left\{
 \left\lceil \left[\frac{\log(\omega(\beta))-\log\delta_0}{\log\left(1-\frac{1-\omega_*^{-1}(\delta_0)}{1+\omega_*^{-1}(\delta_0)}\right)}\right]_+\right\rceil, \left\lceil \frac{\delta_0}{\omega(\beta)}\right\rceil  \right\} +  \left\lceil \log_2\left(\frac{\log \epsilon -2\log(1-\beta)}{\log\beta -2\log(1-\beta)}\right) \right\rceil
\]
iterations for finding some $x^k$ satisfying $\tlambda(x^k) \le \epsilon$, where $\delta_0=F(x^0)-F^*$.
\end{theorem}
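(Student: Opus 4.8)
The plan is to assemble the two bounds that were essentially derived in the discussion immediately preceding the theorem: a bound on the number $K_1$ of iterations of the first (damped Newton) stage, and the bound \eqref{K2-K1} on the number $K_2-K_1$ of iterations of the second (pure Newton) stage. The total iteration count is $K_2 = K_1 + (K_2 - K_1)$, so once both pieces are in hand the conclusion follows by addition.

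For the first stage I would argue as follows. Since $g=0$, the definitions \eqref{blambda} and \eqref{tlambda} give $\blambda(x^k)=\tlambda(x^k)$, and \cite[Theorem 4.1.12]{Ne04} yields $F(x^{k+1}) \le F(x^k)-\omega(\blambda(x^k))$ for every iterate of the first stage. Plugging in the lower bound \eqref{blambda-lowbdd} and then integrating the derivative identity \eqref{deriv-omega} with $\alpha=1$ — exactly the argument used in the proof of Theorem \ref{thm:global-linear-converg}, which in turn rests on the global linear rate of Corollary \ref{cor:global-linear-1} — produces the linear decay \eqref{1st-stage-gap}. Iterating \eqref{1st-stage-gap} shows $F(x^{\bK})-F^* \le \omega(\beta)$ for $\bK$ as displayed before the theorem, whence \eqref{low-bdd1} gives $\tlambda(x^{\bK}) \le \beta$ and therefore $K_1 \le \bK$. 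Combined with the classical estimate \eqref{bound-K1}, this gives $K_1 \le \min\{\bK,\lceil \delta_0/\omega(\beta)\rceil\}$, the first summand in the claimed bound; the $[\,\cdot\,]_+$ inside $\bK$ simply records that no first-stage step is needed once $\delta_0 \le \omega(\beta)$.

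For the second stage, the pure Newton iteration \eqref{NT} started from $x^{K_1}$, which satisfies $\tlambda(x^{K_1}) \le \beta < (3-\sqrt5)/2$, obeys the quadratic contraction \eqref{local-q}; unwinding this recursion and solving for the first index at which $\tlambda \le \epsilon$ gives the bound \eqref{K2-K1} on $K_2-K_1$. Adding the two bounds, and using that both stage counts are nonnegative integers so that the ceilings cause no trouble, yields the stated total. There is no genuine obstacle here: the global linear first-stage rate, the explicit estimate \eqref{1st-stage-gap}, and the local quadratic rate \eqref{local-q} are all already available. The only care required is bookkeeping with the logarithms — in particular noting that in the nontrivial case $\delta_0 > \omega(\beta)$ both the numerator $\log\omega(\beta)-\log\delta_0$ and the denominator $\log\bigl(1-\tfrac{1-\omega_*^{-1}(\delta_0)}{1+\omega_*^{-1}(\delta_0)}\bigr)=\log\bigl(\tfrac{2\omega_*^{-1}(\delta_0)}{1+\omega_*^{-1}(\delta_0)}\bigr)$ are negative, so $\bK\ge 0$ — together with tracking the precise meaning of $K_1$ as the first index with $\tlambda(x^{K_1})\le\beta$.
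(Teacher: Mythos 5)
Your proposal is correct and follows essentially the same route as the paper: the sharpened first-stage bound comes from the linear decay \eqref{1st-stage-gap}, obtained via $\tlambda=\blambda$ for $g=0$, the descent estimate from \cite[Theorem 4.1.12]{Ne04}, the lower bound \eqref{blambda-lowbdd}, and the integration argument of Theorem \ref{thm:global-linear-converg} with $\alpha=1$, after which one takes the minimum with the classical bound \eqref{bound-K1} and adds the second-stage count \eqref{K2-K1}. This matches the paper's derivation in the discussion immediately preceding the theorem.
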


Recently, Zhang and Xiao \cite{ZhXi15-1} proposed an inexact DN method for
solving problem \eqref{self-concord} with $g=0$, whose iterations are updated as follows:
\[
x^{k+1} = x^k - \frac{\hd(x^k)}{1+\hlambda(x^k)}, \quad\quad \forall k \ge 0,
\]
where $\hd(x^k)$ is an approximation to $\td(x^k)$ and
$\hlambda(x^k)=\sqrt{\langle \hd(x^k), \nabla^2 f(x^k) \hd(x^k)\rangle}$ (see
\cite[Algorithm 1]{ZhXi15-1} for details). It is shown in \cite[Theorem 1]{ZhXi15-1} that
such $\{x^k\}$ satisfies
\beqa
F(x^{k+1}) \le F(x^k) - \frac12\omega(\tlambda(x^k)),& & \forall k \ge 0, \label{ZX-1} \\ [6pt]
\omega(\tlambda(x^{k+1})) \le \frac12 \omega(\tlambda(x^k)), & & \mbox{if} \ \tlambda(x^k) \le 1/6, \label{ZX-2}
\eeqa
where $\tlambda(\cdot)$ is defined in \eqref{tlambda}. These relations are used in
\cite{ZhXi15-1} for deriving an iteration complexity of the inexact DN method. In
particular,  its complexity analysis is divided into two parts. The first part estimates
the number of iterations required for generating some $x^{K_1}$ satisfying
$\tlambda(x^{K_1}) \le 1/6$, while the second part estimates the additional
iterations needed for generating some $x^{K_2}$ satisfying $F(x^{K_2})-F^* \le \epsilon$. In \cite{ZhXi15-1}, the relation \eqref{ZX-1} is used to show that
\beq \label{ZX-K1}
K_1 \le \left\lceil (2(F(x^0)-F^*))/\omega(1/6)\right\rceil,
\eeq
while \eqref{ZX-2} is used to establish
\beq \label{ZX-K2-K1}
K_2 - K_1 \le \left\lceil \log_2\left(\frac{2\omega(1/6)}{\epsilon}\right)\right\rceil.
\eeq
It follows from these two relations that the inexact DN method can find an approximate
solution $x^k$ satisfying $F(x^k)-F^* \le \epsilon$ in at most
\[
\left\lceil \frac{2(F(x^0)-F^*)}{\omega(1/6)}\right\rceil  +  \left\lceil \log_2\left(\frac{2\omega(1/6)}{\epsilon}\right)\right\rceil
\]
iterations, which is stated in \cite[Corollary 1]{ZhXi15-1}.

By a similar analysis as above, one can show that the inexact DN method (\cite[Algorithm 1]{ZhXi15-1}) is globally linearly convergent. In fact,  it can be shown that
\beq \label{ZhXi-linear}
F(x^{k+1}) - F^* \le \left(1-\frac{1-\omega_*^{-1}(\delta_0)}{2(1+\omega_*^{-1}(\delta_0))}\right) (F(x^k) - F^*), \quad\quad \forall k \ge 0,
\eeq
where $\delta_0=F(x^0)-F^*$. Indeed,  since $g=0$, one has $\tlambda(x^k) = \blambda(x^k)$. It  follows from this, \eqref{blambda-lowbdd}
and \eqref{ZX-1} that
\[
F(x^{k+1}) \le F(x^k) - \frac12\omega(\omega_*^{-1}(F(x^k)-F^*)), \quad\quad\forall k \ge 0.
\]
The relation \eqref{ZhXi-linear} then follows from this and a similar derivation as
in the proof of Theorem \ref{thm:global-linear-converg}.  By \eqref{ZX-K1}, \eqref{ZhXi-linear} and a similar argument as above,  one can have
\[
K_1 \le \min\left\{
\left\lceil \left[\frac{\log(\frac12\omega(1/6))- \log\delta_0}{\log\left(1-\frac{1-\omega_*^{-1}(\delta_0)}{2(1+\omega_*^{-1}(\delta_0))}\right)}\right]_+\right\rceil, \left\lceil \frac{2\delta_0}{\omega(1/6)}\right\rceil  \right\},
\]
which improves the bound \eqref{ZX-K1}. Combining this relation and \eqref{ZX-K2-K1},
we thus obtain the following new iteration complexity for finding an approximate solution
of  \eqref{self-concord} with $g=0$ by the aforementioned inexact DN method.

\begin{theorem}
Let $x^0 \in \dom(F)$ and $\epsilon>0$ be given. Then the inexact DN method (\cite[Algorithm 1]{ZhXi15-1}) for solving problem \eqref{self-concord} with $g=0$ requires at most
\[
\min\left\{
\left\lceil \left[\frac{\log(\frac12\omega(1/6))- \log\delta_0}{\log\left(1-\frac{1-\omega_*^{-1}(\delta_0)}{2(1+\omega_*^{-1}(\delta_0))}\right)}\right]_+\right\rceil, \left\lceil \frac{2\delta_0}{\omega(1/6)}\right\rceil  \right\} +  \left\lceil \log_2\left(\frac{2\omega(1/6)}{\epsilon}\right)\right\rceil
\]
iterations for finding some $x^k$ satisfying $F(x^k)-F^* \le \epsilon$, where $\delta_0=F(x^0)-F^*$.
\end{theorem}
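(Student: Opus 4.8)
The plan is to combine the two-part structure of the original complexity analysis in \cite{ZhXi15-1} with the new global linear rate \eqref{ZhXi-linear}, exactly mirroring the argument already carried out for the mixture of DN and Newton methods. First I would recall that the inexact DN method proceeds in two stages: stage one runs the inexact damped Newton iterations until reaching some $x^{K_1}$ with $\tlambda(x^{K_1}) \le 1/6$, and stage two continues until reaching some $x^{K_2}$ with $F(x^{K_2})-F^* \le \epsilon$. The total iteration count is $K_1 + (K_2 - K_1)$, so it suffices to bound each piece and add the two ceilings.

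For the second piece, I would simply quote the bound \eqref{ZX-K2-K1} already derived in \cite{ZhXi15-1} from the local quadratic-type contraction \eqref{ZX-2}, namely $K_2 - K_1 \le \lceil \log_2(2\omega(1/6)/\epsilon)\rceil$; nothing new is needed there. For the first piece, the key new input is \eqref{ZhXi-linear}, which gives $F(x^{k+1})-F^* \le \rho (F(x^k)-F^*)$ for all $k \ge 0$ with $\rho = 1 - \frac{1-\omega_*^{-1}(\delta_0)}{2(1+\omega_*^{-1}(\delta_0))} \in (0,1)$. Iterating this yields $F(x^k)-F^* \le \rho^k \delta_0$, so as soon as $\rho^k \delta_0 \le \tfrac12\omega(1/6)$ we get $F(x^k)-F^* \le \tfrac12\omega(1/6)$, which by \eqref{low-bdd1} (using $g=0$, so $\tlambda = \blambda$, and the bound $\omega(\tlambda(x^k)) \le F(x^k)-F^*$) forces $\tlambda(x^k) \le \omega^{-1}(\tfrac12\omega(1/6)) \le 1/6$. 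Solving $\rho^k\delta_0 \le \tfrac12\omega(1/6)$ for $k$ gives $k \ge \bigl(\log(\tfrac12\omega(1/6)) - \log\delta_0\bigr)/\log\rho$, and taking the positive part and ceiling shows that $K_1$ is at most the first quantity in the $\min$. Combining this with the pre-existing bound $K_1 \le \lceil 2\delta_0/\omega(1/6)\rceil$ from \eqref{ZX-K1} yields the stated $\min$ over the two expressions for $K_1$, and adding $\lceil\log_2(2\omega(1/6)/\epsilon)\rceil$ completes the proof.

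The one genuinely nontrivial ingredient is establishing \eqref{ZhXi-linear} itself, but this is spelled out in the surrounding text: since $g=0$ one has $\tlambda(x^k)=\blambda(x^k)$, so \eqref{ZX-1} together with $\omega$ being increasing gives $F(x^{k+1}) \le F(x^k) - \tfrac12\omega(\omega_*^{-1}(F(x^k)-F^*))$, and then the differential-inequality manipulation from the proof of Theorem \ref{thm:global-linear-converg} — computing $\frac{d}{ds}[\omega(\alpha\omega_*^{-1}(s))] = \frac{\alpha^2(1-\omega_*^{-1}(s))}{1+\alpha\omega_*^{-1}(s)}$, using monotonicity of $\omega_*^{-1}$ on $[0,\delta_0]$, and integrating — with $\alpha = 1$ delivers $\omega(\omega_*^{-1}(\delta)) \ge \frac{1-\omega_*^{-1}(\delta_0)}{1+\omega_*^{-1}(\delta_0)}\delta$, hence \eqref{ZhXi-linear}. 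Since the theorem statement may simply invoke the already-displayed relations \eqref{ZX-K1}, \eqref{ZX-K2-K1}, and \eqref{ZhXi-linear}, the proof of the theorem proper reduces to the arithmetic of combining three ceiling bounds, and the main obstacle — verifying the linear rate — has effectively been discharged in the lead-in discussion.

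Thus the whole proof is essentially bookkeeping: assemble $K_1 \le \min\{\text{(log-bound)},\ \lceil 2\delta_0/\omega(1/6)\rceil\}$ from \eqref{ZX-K1} and the consequence of \eqref{ZhXi-linear}, assemble $K_2 - K_1 \le \lceil\log_2(2\omega(1/6)/\epsilon)\rceil$ from \eqref{ZX-K2-K1}, and sum. I expect no real obstacle beyond being careful that the termination condition of stage one ($\tlambda \le 1/6$) is implied by the functional-gap condition $F - F^* \le \tfrac12\omega(1/6)$ via \eqref{low-bdd1}, and that the positive-part/ceiling operations are applied consistently with the already-proved companion theorem for the mixture method.
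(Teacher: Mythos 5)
Your proposal follows essentially the same route as the paper: establish the global linear rate \eqref{ZhXi-linear} from \eqref{ZX-1}, \eqref{blambda-lowbdd} (using $\tlambda=\blambda$ when $g=0$) and the integral argument of Theorem \ref{thm:global-linear-converg} with $\alpha=1$, use it together with \eqref{low-bdd1} to bound $K_1$ by the minimum of the new logarithmic bound and the old bound \eqref{ZX-K1}, and add the unchanged second-stage bound \eqref{ZX-K2-K1}. The details, including the observation that $F(x^k)-F^*\le\frac12\omega(1/6)$ forces $\tlambda(x^k)\le 1/6$, match the paper's derivation, so the proof is correct as proposed.
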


Dinh-Tran et al.\ recently proposed in \cite[Algorithm 1]{TrKyCe15} a proximal Newton
method for solving problem \eqref{self-concord} with general $g$. Akin to the
aforementioned method \cite[Section 4.1.5]{Ne04} for \eqref{self-concord} with $g=0$,
this method  also consists of two stages (or phases). The first stage performs the PDN
iterations in the form of \eqref{PDN-iter} for finding some $x^{K_1}$ such that
$\tlambda(x^{K_1}) \le \omega(0.2)$, while the second stage executes the proximal
Newton iterations in the form of \eqref{NT} starting at $x^{K_1}$ and terminating at
some $x^{K_2}$ such that $\tlambda(x^{K_2}) \le \epsilon$. As shown in \cite[Theorem 6]{TrKyCe15}, the second stage converges quadratically. The following relations are  essentially established in  \cite[Theorem 7]{TrKyCe15}:
\beqa
K_1 &\le& \left\lceil(F(x^0)-F^*)/\omega(0.2)\right\rceil, \label{bound-K1-Tr} \\
K_2 - K_1 &\le&\left\lceil 1.5\log\log\frac{0.28}{\epsilon}\right\rceil.   \label{K2-K1-Tr}
\eeqa
Throughout  the remainder of this subsection, suppose that Assumption \ref{assump2}
holds. Observe that the first stage of this method is just PDN, which is a special case of
RBPDN with $n=1$ and $\eta=0$. It thus follows from Corollary \ref{cor:global-linear-1}
that the first stage converges linearly. In fact, it can be shown that
\beq \label{Dinh-1st-stage}
F(x^{k+1}) - F^* \le  \left[1-\frac{\hc^2  (1- \omega_*^{-1}(\delta_0))} {(1+\hc  \omega_*^{-1}(\delta_0)}\right]^k(F(x^0)-F^*),  \quad\quad \forall k \le K_1,
\eeq
where $\delta_0 = F(x^0)-F^*$, $\hc  =  c_3\sqrt{\sigma_f}$, and $\sigma_f$ and $c_3$ are given in \eqref{sigma-f} and Assumption \ref{assump2}, respectively.
Indeed, by \eqref{tlambda} and \eqref{td-2norms}, one has $\|\td(x^k)\| \le \tlambda(x^k)/\sqrt{\sigma_f}$.
In addition, by Assumption \ref{assump2}, we have $\|\td(x^k)\| \ge c_3 \blambda(x^k)$. It follows from these two relations that
$\tlambda(x^k) \ge  \hc \blambda(x^k)$, which together with
\eqref{blambda-lowbdd} yields
 $\tlambda(x^k) \ge \hc \omega_*^{-1}(F(x^k)-F^*)$.
This and \eqref{Newton-reduct} imply that
\[
F(x^{k+1}) \le F(x^k) - \omega(\hc\omega_*^{-1}(F(x^k)-F^*)), \quad\quad\forall k \le K_1.
\]
The relation \eqref{Dinh-1st-stage} then follows from this and a similar argument as
in the proof of Theorem \ref{thm:global-linear-converg}. Let
\[
\bK =  \left\lceil \left[\frac{\log(\omega(0.2))-\log\delta_0}{\log\left(1-\frac{\hc^2  (1- \omega_*^{-1}(\delta_0))} {(1+\hc  \omega_*^{-1}(\delta_0)}\right)}\right]_+\right\rceil.
\]
By \eqref{Dinh-1st-stage}, one can easily verify that $F(x^{\bK})-F^* \le \omega(0.2)$,
which along with \eqref{low-bdd1} implies that  $\tlambda(x^{\bK}) \le 0.2$.
By \eqref{bound-K1} and the definition of $K_1$, one can have
$K_1 \le \min\left\{\bK, \left\lceil \delta_0/\omega(0.2)\right\rceil \right\}$,
which sharpens the bound \eqref{bound-K1-Tr}. Combining this relation and \eqref{K2-K1-Tr}, we thus obtain the following new iteration complexity for finding an approximate
solution of \eqref{self-concord} by the aforementioned proximal Newton
method.

\begin{theorem} \label{Tr-complexity}
Let $x^0 \in \dom(F)$ and $\epsilon>0$ be given. Suppose that Assumption
\ref{assump2} holds. Then the proximal Newton method \cite[Algorithm 1]{TrKyCe15} for solving problem \eqref{self-concord}  requires at most
\[
\min\left\{
\left\lceil \left[\frac{\log(\omega(0.2))-\log\delta_0}{\log\left(1-\frac{\hc^2  (1- \omega_*^{-1}(\delta_0))} {(1+\hc  \omega_*^{-1}(\delta_0)}\right)}\right]_+\right\rceil, \left\lceil \frac{\delta_0}{\omega(0.2)}\right\rceil  \right\} +  \left\lceil 1.5\log\log\frac{0.28}{\epsilon} \right\rceil
\]
iterations for finding some $x^k$ satisfying $\tlambda(x^k) \le \epsilon$, where $\delta_0=F(x^0)-F^*$, $\hc  = c_3 \sqrt{\sigma_f}$, and $\sigma_f$ and $c_3$ are given in \eqref{sigma-f} and Assumption \ref{assump2}, respectively.
\end{theorem}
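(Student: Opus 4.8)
The plan is to follow the two-stage structure of the proximal Newton method of \cite[Algorithm~1]{TrKyCe15} and to write $K_2 = K_1 + (K_2-K_1)$. The second stage runs the full Newton steps \eqref{NT} and converges quadratically, so for the term $K_2-K_1$ I would simply reuse the bound \eqref{K2-K1-Tr}, namely $K_2-K_1 \le \lceil 1.5\log\log(0.28/\epsilon)\rceil$, which is essentially proved in \cite[Theorem~7]{TrKyCe15}. The whole contribution is therefore a sharper bound on the length $K_1$ of the first stage, which runs the PDN iterations \eqref{PDN-iter} and is precisely RBPDN with $n=1$ and $\eta=0$; by Corollary~\ref{cor:global-linear-1} this stage is globally linearly convergent under Assumption~\ref{assump2}, but I need a quantitative rate.

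The main step is thus to establish the per-iteration estimate \eqref{Dinh-1st-stage}. I would chain the following facts, all available earlier: from \eqref{tlambda} and \eqref{td-2norms}, $\|\td(x^k)\| \le \tlambda(x^k)/\sqrt{\sigma_f}$; from Assumption~\ref{assump2}, $\|\td(x^k)\| \ge c_3\,\blambda(x^k)$; hence $\tlambda(x^k) \ge \hc\,\blambda(x^k)$ with $\hc := c_3\sqrt{\sigma_f}$; and \eqref{blambda-lowbdd} then gives $\tlambda(x^k) \ge \hc\,\omega_*^{-1}(F(x^k)-F^*)$. Substituting this into the exact-direction descent inequality \eqref{Newton-reduct}, $F(x^{k+1}) \le F(x^k) - \omega(\tlambda(x^k))$, and using monotonicity of $\omega$, I obtain, writing $\delta_k := F(x^k)-F^*$, the recursion $\delta_{k+1} \le \delta_k - \omega(\hc\,\omega_*^{-1}(\delta_k))$. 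To convert this into a contraction I would argue exactly as in the proof of Theorem~\ref{thm:global-linear-converg}: since $\delta_k \le \delta_0$ and, by the chain rule together with $(\omega_*^{-1})'(s) = (1-\omega_*^{-1}(s))/\omega_*^{-1}(s)$, the identity \eqref{deriv-omega} gives $\frac{d}{ds}\omega(\hc\,\omega_*^{-1}(s)) = \hc^2(1-\omega_*^{-1}(s))/(1+\hc\,\omega_*^{-1}(s))$, integrating over $[0,\delta_k]$ and bounding the integrand below by its value at $\delta_0$ yields $\omega(\hc\,\omega_*^{-1}(\delta_k)) \ge \rho\,\delta_k$ with $\rho := \hc^2(1-\omega_*^{-1}(\delta_0))/(1+\hc\,\omega_*^{-1}(\delta_0))$, so $\delta_{k+1} \le (1-\rho)\delta_k$, which is \eqref{Dinh-1st-stage}.

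Finally I would convert the linear rate into a bound on $K_1$. Put $\bK := \lceil\,[(\log\omega(0.2) - \log\delta_0)/\log(1-\rho)]_+\,\rceil$, the operation $[\,\cdot\,]_+$ absorbing the trivial case $\delta_0 \le \omega(0.2)$; then \eqref{Dinh-1st-stage} yields $F(x^{\bK}) - F^* \le \omega(0.2)$, and \eqref{low-bdd1}, which reads $\omega(\tlambda(x^{\bK})) \le F(x^{\bK})-F^*$, together with monotonicity of $\omega$ gives $\tlambda(x^{\bK}) \le 0.2$, so the first stage has terminated by step $\bK$, that is $K_1 \le \bK$. Combining with the already-known bound $K_1 \le \lceil\delta_0/\omega(0.2)\rceil$ from \eqref{bound-K1-Tr} gives $K_1 \le \min\{\bK,\,\lceil\delta_0/\omega(0.2)\rceil\}$, and adding the second-stage bound \eqref{K2-K1-Tr} produces the claimed total. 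The only non-routine point is the passage from the recursion for $\delta_k$ to the geometric decay $\delta_{k+1} \le (1-\rho)\delta_k$; as noted this is the computation already isolated in Theorem~\ref{thm:global-linear-converg}, so the remaining work is just to check $\rho \in (0,1)$ --- here $\rho > 0$ because $\omega_*^{-1}(\delta_0) \in [0,1)$ and $\hc > 0$, while $\rho \le 1$ follows since $F(\tx) \ge F^*$ forces $\omega(\hc\,\omega_*^{-1}(\delta_k)) \le \delta_k$ in \eqref{Newton-reduct}.
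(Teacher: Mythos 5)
Your proposal is correct and follows essentially the same route as the paper: the same two-stage decomposition reusing \eqref{K2-K1-Tr}, the same chain $\tlambda(x^k)\ge \hc\,\blambda(x^k)\ge \hc\,\omega_*^{-1}(F(x^k)-F^*)$ fed into \eqref{Newton-reduct}, the same integral argument from Theorem \ref{thm:global-linear-converg} to get the contraction factor $1-\rho$, and the same combination of $\bK$ with the bound $\lceil\delta_0/\omega(0.2)\rceil$ from \eqref{bound-K1-Tr}. The only addition is your explicit check that $\rho\in(0,1)$, which the paper leaves implicit.
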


{\bf Remark:} Suppose that $g$ is Lipschitz differentiable in $\cS(x^0)$ with a
Lipschitz constant $L_g \ge 0$. It follows from Proposition \ref{suff-cond-assump2} that
Assumption \ref{assump2} holds with $c_3 = \sqrt{\sigma_f}/(L_f+L_g)$,
where $L_f$ is defined in \eqref{L}, and thus Theorem \ref{Tr-complexity} holds
with $\hc  =  \sigma_f/(L_f+L_g)$.

\section{Numerical results}
\label{res}

In this section we conduct numerical experiment to test the performance of RBPDN. In
particular,  we apply RBPDN to solve a regularized logistic regression (RLR) model and
a sparse regularized logistic regression (SRLR) model. We also compare RBPDN with a
randomized block accelerated proximal gradient (RBAPG) method proposed in
\cite{LiLuXi15} on these problems.  All codes are written in MATLAB and all computations are performed on a MacBook Pro running with Mac OS X Lion 10.7.4 and 4GB memory.

For the RLR problem, our goal is to minimize a regularized empirical logistic loss function, particularly, to solve the problem:
\beq \label{logistic}
L^*_{\mu} := \min\limits_{x\in\Re^N} \left\{L_{\mu}(x) := \frac1m \sum^m_{i=1} \log(1+\exp(-y_i \langle w^i,x\rangle)) + \frac{\mu}{2}\|x\|^2\right\}
\eeq
for some $\mu>0$, where $w^i\in\Re^N$ is a sample of $N$ features and $y_i\in\{-1,1\}$ is a binary classification of this sample. This model has
recently been considered in \cite{ZhXi15-1}. Similarly, for the SRLR problem, we aim to solve the problem:
\beq \label{logistic-L1}
L^*_{\gamma,\mu} := \min\limits_{x\in\Re^N} \left\{L_{\gamma,\mu}(x) := \frac1m \sum^m_{i=1} \log(1+\exp(-y_i\langle w^i,x\rangle)) + \frac{\mu}{2}\|x\|^2 + \gamma \|x\|_1 \right\}
\eeq
for some $\mu, \gamma>0$.

In our experiments below, we fix $m=1000$ and set $N=3000, 6000, \ldots, 30000$. For
each pair $(m,N)$, we randomly generate 10 copies of data $\{(w^i,y_i)\}^m_{i=1}$
independently. In each copy,  the elements of $w^i$ are generated according to the
standard uniform distribution on the open interval $(0,1)$ and $y_i$ is generated
according to the distribution $\P(\xi=-1)=\P(\xi=1)=1/2$. As in \cite{ZhXi15-1}, we
normalize  the data so that $\|w^i\|=1$ for all $i=1,\ldots,m$, and set the regularization
parameters $\mu=10^{-5}$ and $\gamma=10^{-4}$.

We now apply RBPDN and RBAPG to solve problem \eqref{logistic}. For both methods, the decision variable $x\in\Re^N$ is divided into 10 blocks sequentially and equally. At each iteration $k$, they pick a block $\i$ uniformly at random. For RBPDN, it needs to find a search direction $d_{\i}(x^k)$ satisfying
 \eqref{d-cond1} and \eqref{d-cond2} with $f=L_{\mu}$ and $g=0$,
 that is,
\beqa
 \nabla^2_{\i\i} L_{\mu}(x^k) d_\i(x^k)+\nabla_{\i} L_{\mu}(x^k) + v_\i = 0, \label{v-subgrad1}\\
\sqrt{\langle v_\i, (\nabla^2_{\i\i} L_{\mu}(x^k))^{-1} v_\i \rangle} \le \eta \sqrt{\langle d_\i(x^k), \nabla^2_{\i\i} L_{\mu}(x^k) d_\i(x^k)\rangle} \label{vi1}
\eeqa
for some $\eta\in[0, 1/4]$. To obtain such a $d_{\i}(x^k)$, we apply conjugate gradient
method to solve the equation
\[
\nabla^2_{\i\i} L_{\mu}(x^k) d_\i = -\nabla_{\i} L_{\mu}(x^k)
\]
until an approximate solution $d_\i$ satisfying
\beq \label{d-error}
\|\nabla^2_{\i\i} L_{\mu}(x^k) d_\i +\nabla_{\i} L_{\mu}(x^k)\| \le  \frac14\sqrt{\mu \langle d_\i, \nabla^2_{\i\i} L_{\mu}(x^k) d_\i \rangle}.
\eeq
is found and then set $d_{\i}(x^k)=d_\i$. Notice from \eqref{logistic} that $\nabla^2_{\i\i} L_{\mu}(x^k) \succeq \mu I$. In view of this, one can verify that such $d_{\i}(x^k)$ satisfies
\eqref{v-subgrad1} and \eqref{vi1} with $\eta=1/4$.
In addition, we choose $x^0=0$ for both methods and terminate them once  the duality
gap is below $10^{-3}$. More specifically,  one can easily derive a dual of problem \eqref{logistic} given by
\[
\max\limits_{s\in\Re^m} \left\{D_{\mu}(s) := -\frac1m \sum^m_{i=1} \log(1-ms_i) - \frac{1}{2\mu}\left\|\sum^m_{i=1}s_iy_iw^i\right\|^2 - \sum^m_{i=1} s_i \log\left(\frac{ms_i}{1-ms_i}\right)\right\}.
\]
Let $\{x^k\}$ be a sequence of approximate solutions to problem \eqref{logistic} generated by RBPDN or RBAPG and $s^k \in \Re^m$ the associated dual
sequence defined as follows:
\beq \label{sk}
s^k_i = \frac{\exp(-y_i\langle w^i,x^k\rangle)}{m(1+\exp(-y_i\langle w^i,x^k\rangle))}, \quad\quad i=1,\ldots, m.
\eeq
We use $L_{\mu}(x^k)-D_{\mu}(s^k) \le 10^{-3}$ as the termination
criterion for  RBPDN or RBAPG, which is checked once every 10 iterations.

The computational results averaged over the 10 copies of data generated above
are presented in Table \ref{res1}. In detail, the problem size $N$ is listed in the first  column. The average number of iterations (upon round off) for RBPDN and RBAPG
are given in the next two columns.  The average CPU time (in seconds) for these methods
are presented in columns four and five, and the average objective function value of
\eqref{logistic} obtained by them are given in the last two columns. One can observe that both methods are comparable in terms of objective values, but RBPDN
substantially outperforms RBAPG  in terms of CPU time.

In the next experiment, we apply RBPDN and RBAPG to solve problem \eqref{logistic-L1}. Same as above,  the decision variable $x\in\Re^N$ is divided into 10 blocks sequentially and equally. At each iteration $k$, they pick a block $\i$ uniformly at random. For RBPDN, it needs to compute a search direction $d_{\i}(x^k)$ satisfying
\eqref{d-cond1} and \eqref{d-cond2} with $f=L_{\gamma,\mu}$ and $g=\gamma \|\cdot\|_1$,
 that is,
\beqa
- v_\i \in  \nabla^2_{\i\i} L_{\mu}(x^k) d_\i(x^k)+\nabla_{\i} L_{\mu}(x^k) + \gamma \partial(\|x^k_\i+d_\i(x^k)\|_1), \label{v-subgrad-3}\\
\sqrt{\langle v_\i, (\nabla^2_{\i\i} L_{\mu}(x^k))^{-1} v_\i \rangle} \le \eta \sqrt{\langle d_\i(x^k), \nabla^2_{\i\i} L_{\mu}(x^k) d_\i(x^k)\rangle} \label{vi-3}
\eeqa
for some $\eta\in[0, 1/4]$. To obtain such a $d_{\i}(x^k)$, we apply FISTA \cite{BeTe09} to solve the problem
\[
\min\limits_{d_\i} \left\{\frac12 \langle d_\i,  \nabla^2_{\i\i} L_{\mu}(x^k) d_\i \rangle + \langle \nabla_{\i} L_{\mu}(x^k), d_\i \rangle + \gamma \|x^k_\i + d_\i\|_1\right\}
\]
until an approximate solution $d_\i$ satisfying \eqref{d-error} and \eqref{v-subgrad-3}  is found and then set $d_{\i}(x^k)=d_\i$. By the same argument as above, one can see
that such $d_{\i}(x^k)$ also satisfies  \eqref{vi-3} with $\eta=1/4$. In addition, we choose $x^0=0$ for both methods and terminate them  the duality
gap is below $10^{-3}$. More specifically, one can easily derive a dual of problem \eqref{logistic-L1} as follows:
\[
\max\limits_{s\in\Re^m} \left\{
\ba{l}
D_{\gamma,\mu}(s) :=  -\frac1m \sum^m_{i=1} \log(1-ms_i) + \frac{\mu}{2}\|h(s)\|^2 + \gamma \|\theta(s)\|_1 - \sum^m_{i=1} s_i \log\left(\frac{ms_i}{1-ms_i}\right) \\ [6pt]
\quad\quad\quad\quad\quad - \langle \sum^m_{i=1}s_iy_iw^i, h(s) \rangle
\ea
\right\},
\]
where
\[
h(s) := \arg\min\limits_{h\in\Re^n}\left\{ \frac{\mu}{2}\|h\|^2  -  \langle \sum^m_{i=1}s_iy_iw^i, h\rangle+ \gamma \|h\|_1\right\}, \quad\quad \forall s \in \Re^m.
\]
Let $\{x^k\}$ be a sequence of approximate solutions to problem \eqref{logistic-L1} generated by RBPDN or RBAPG and $s^k \in \Re^m$ the associated dual
sequence defined as in \eqref{sk}. We use
$L_{\gamma,\mu}(x^k)-D_{\gamma,\mu}(s^k) \le 10^{-3}$ as the termination
criterion for  RBPDN or RBAPG, which is checked once every 10 iterations.

The computational results averaged over the 10 copies of data generated above
are presented in Table \ref{res2}, which is similar to Table \ref{res1} except that
it has two additional columns displaying the average cardinality (upon round off) of
the solutions obtained by RBPDN and RBAPG. We can observe that both methods are comparable in terms of objective values, but RBPDN substantially outperforms RBAPG
 in terms of CPU time and the sparsity of solutions.

\begin{table}[t!]
\caption{Comparison on RBPDN and RBAPG for solving \eqref{logistic}}
\centering
\label{res1}
\begin{tabular}{|c||cc||cc||cc|}
\hline
\multicolumn{1}{|c||}{Problem} &  \multicolumn{2}{c||}{Iteration} &  \multicolumn{2}{c||}{CPU Time} &  \multicolumn{2}{c|}{Objective Value}   \\
 \multicolumn{1}{|c||}{N}
& \multicolumn{1}{c}{\sc RBPDN} & \multicolumn{1}{c||}{\sc RBAPG}
& \multicolumn{1}{c}{\sc RBPDN} & \multicolumn{1}{c||}{\sc RBAPG}
& \multicolumn{1}{c}{\sc RBPDN} & \multicolumn{1}{c|}{\sc RBAPG}   \\
\hline
3000 & 111 & 2837 & 0.13 & 2.01 & 0.2300 & 0.2298 \\
6000 & 53 & 2756 & 0.12 & 3.61 & 0.2142 & 0.2141 \\
9000 & 56 & 2339 & 0.22 & 5.80 & 0.2092 & 0.2092 \\
12000 & 52 & 2083 & 0.32 & 7.64 & 0.2079 & 0.2078 \\
15000 & 48 & 2084 & 0.40 & 10.33 & 0.2069 & 0.2069 \\
18000 & 59 & 1881 & 0.59 & 9.23 & 0.2058 & 0.2059 \\
21000 & 46 & 1866 & 0.55 & 10.28 & 0.2050 & 0.2050 \\
24000 & 53 & 1854 & 0.72 & 11.33 & 0.2050 & 0.2050 \\
27000 & 54 & 1848 & 0.82 & 12.38 & 0.2045 & 0.2044 \\
30000 & 51 & 1924 & 0.87 & 13.87 & 0.2043 & 0.2043 \\
\hline
\end{tabular}
\end{table}

\begin{table}[t!]
\caption{Comparison on RBPDN and RBAPG for solving  \eqref{logistic-L1}}
\centering
\label{res2}
\begin{center}
\begin{small}
\begin{tabular}{|c||cc||cc||cc||cc|}
\hline
\multicolumn{1}{|c||}{Problem} &  \multicolumn{2}{c||}{Iteration} &  \multicolumn{2}{c||}{CPU Time} &  \multicolumn{2}{c||}{Objective Value}  & \multicolumn{2}{c|}{Cardinality}  \\
 \multicolumn{1}{|c||}{N}
& \multicolumn{1}{c}{\sc RBPDN} & \multicolumn{1}{c||}{\sc RBAPG}
& \multicolumn{1}{c}{\sc RBPDN} & \multicolumn{1}{c||}{\sc RBAPG}
& \multicolumn{1}{c}{\sc RBPDN} & \multicolumn{1}{c||}{\sc RBAPG}
& \multicolumn{1}{c}{\sc RBPDN} & \multicolumn{1}{c|}{\sc RBAPG}   \\
\hline
3000 & 2233 & 6126 & 5.44 & 3.19 & 0.5529 & 0.5532 & 749 & 1705 \\
6000 & 1003 & 6239 & 3.82 & 4.74 & 0.5941 & 0.5943 & 840 & 2372 \\
9000 & 626 & 6174 & 3.17 & 6.39 & 0.6210 & 0.6211 & 857 & 3000 \\
12000 & 408 & 5985 & 2.63 & 7.70 & 0.6398 & 0.6400 & 852 & 3108 \\
15000 & 294 & 5762 & 2.30 & 9.06 & 0.6521 & 0.6523 & 815 & 3340 \\
18000 & 272 & 5476 & 2.50 & 10.26 & 0.6616 & 0.6618 & 748 & 3237 \\
21000 & 208 & 5287 & 2.26 & 11.49 & 0.6693 & 0.6694 & 698 & 3173 \\
24000 & 186 & 5146 & 2.31 & 12.76 & 0.6748 & 0.6748 & 650 & 3334 \\
27000 & 180 & 5059 & 2.78 & 14.37 & 0.6790 & 0.6791 & 571 & 4157 \\
30000 & 153 & 4942 & 2.74 & 15.80 & 0.6824 & 0.6824 & 527 & 4312 \\
\hline
\end{tabular}
\end{small}
\end{center}
\end{table}

\end{document}